\numberwithin{equation}{section}
\theoremstyle{plain}
\newtheorem{thm}{Theorem}[section]
\theoremstyle{remark}
\newtheorem{Lemma}{Lemma}[section]
\newtheorem{definition}{Definition}[section]
\begin{document}

\title{Benign overfitting without concentration}
\maketitle
\centerline{\author{Zong Shang\footnote{shangzong2117@mails.jlu.edu.cn, College of Computer Science and Technology, Jilin University, China.}}}





\begin{abstract}
We obtain a sufficient condition for benign overfitting of linear regression problem. Our result does not rely on concentration argument but on small-ball assumption and thus can holds in heavy-tailed case. The basic idea is to establish a coordinate small-ball estimate in terms of effective rank so that we can calibrate the balance of epsilon-Net and exponential probability. Our result indicates that benign overfitting is not depending on concentration property of the input vector. Finally, we discuss potential difficulties for benign overfitting beyond linear model and a benign overfitting result without truncated effective rank.
\end{abstract}
%


\section{Introduction}

In recent years, there are tremendous interest in studying generalization property of statistical model when it interpolates the input data. The classical learning theory suggests that when the predictor fits input data perfectly, it will suffer from noise so that it will not generalize well. To overcome this problem, regularization and penalized learning procedures like LASSO are studied to weaken the effect of noise to avoid overfitting. However, some empirical experiments indicate that overfitting may perform well. Why can overfitting perform well? In what cases can overfitting perform well? These questions motivated a series work on this field.\par
The original motivation is from the Deep Learning community, who empirically revealed that overfitting Deep Neural Network can still generalize well, see \cite{zhang2016understanding}. This counter-intuitive phenomenon still appear for linear regression and kernel ridge regression, see \cite{belkin2018understand} and \cite{tsigler2020benign}. They believe that investigating benign overfitting phenomenon in linear regression case will benefit to the more complex Deep Neural Networks case.\par
The cornerstone work \cite{bartlett2019benign} presented a minimax bound of generalization error of overfitting linear regression. Their result is in terms of effective ranks, which measures the tail behavior of eigenvalues of covariance matrix and will be defined later in our paper. Recently, \cite{chinot2020benign} improved their results to the large deviation regime. Both their results rely on the assumption that the input vector is a gaussian vector. This assumption is relaxed in \cite{tsigler2020benign} to sub-gaussian vector. However, all these work use concentration-argument and thus only adapt to a number of well-behaved distributions. In heavy-tailed case, small-ball method is often employed to study generalization property of statistical models, see \cite{koltchinskii2015bounding}. However, the original coordinate small ball estimate cannot be directly used when the input vector is anisotropic, but anisotropicity is a necessary condition for benign overfitting. Fortunately, this issue can be solved by a simple modification. In this paper, we derive a sufficient condition for benign overfitting when the input is heavy-tailed by using small-ball method.\par

Benign overfitting phenomenon was firstly discovered by \cite{zhang2016understanding}, and a great deal of effort in it has been devoted to the investigating its reason. \cite{mei2019generalization}, \cite{hastie2019surprises} studied asymptotic generalization error in random feature setting. In fact, benign overfitting in linear regression is not equivalent to that of random feature because parameters in random features cannot be controlled to minimize empirical loss like that in linear regression(only parameters in second layer can be used to minimize empirical loss while others in first layer are randomized). However, their empirical results illustrates that linear regression and random features of shallow neural network share similar double-descent risk curve. \cite{bartlett2019benign} obtained a two-sides non-asymptotic generalization bound of prediction error in gaussian linear regression setting. Their result is in terms of effective ranks, which is a truncated version of stable rank in Asymptotic Geometric Analysis. \cite{tsigler2020benign} generalized it into sub-gaussian linear regression. Their result is also in terms of effective ranks. \cite{liang2020just},\cite{rakhlin2019consistency}, and \cite{liang2020risk} studied benign overfitting in Reproduced Kernel Hilbert Space. \cite{belkin2019data} showed that interpolant maybe the optimal predictor in some cases.\par
The closest to our work is \cite{chinot2020benign}, who derived a sufficient condition for benign overfitting for gaussian linear regression in terms of effective rank. We obtain similar results but we only assume the input satisfies small ball assumption, in stead of gaussian distribution. Our result can also be partially compared to \cite{tsigler2020benign}, where they assumed both sub-gaussian and a marginal small ball assumption.

\subsection{Background and notation}

In this paper, we consider linear regression problems in $\mathbb{R}^p$. Given a dataset $D_{N}=\left\{\left(X_{i},Y_{i} \right ) \right\}_{i=1}^{N}$ and $Y_{i}=\left<X_{i},\alpha^{\ast} \right>+\xi_{i}$, where $\alpha^{\ast}\in\mathbb{R}^p$ is an unknown vector and $(X_{i})_{i=1}^{N}$ are i.i.d. copies of $X$, $\xi_{i}$ are unpredictable i.i.d. centered sub-gaussian noise, which is independent with $X$. Because we are going to compare linear regression with more general functions class later, we also often use $f_{\alpha}(\cdot)$ to denote $\left<\cdot,\alpha\right>$ in the following, and $\mathcal{F}_{A}$ as a set of $f_{\alpha}$ such that $\alpha\in A$. Assume the random vector $X\in\mathbb{R}^p$ satisfies weak small ball assumption with parameter $(\mathcal{L},\theta)$, which will be defined in Definition \ref{defi_wSBA} ,and denote its covariance matrix as $\Sigma$. Define the design matrix $\mathbf{X}$ with $N$ lines $X_{i}^{T}$. Denote response vector $\mathbf{Y}=\left(Y_{1},\cdots ,Y_{N}\right)$.\par
When $p>n$, the least-square estimator can interpolate $D_{N}$. Denote the one that has the smallest $\ell_{2}$ norm as $\hat{\alpha}$. That is to say,
\[
\hat{\alpha}=\mathbf{X}^{\dagger}\mathbf{Y},
\]
where $\mathbf{X}^{\dagger}$ is the Moore-Penrose pseudo inverse of $\mathbf{X}$. Denote $H_{N}\subset\mathbb{R}^p$ as $H_{N}=\left\{\alpha\in\mathbb{R}^p:\, \mathbf{X}\alpha=\mathbf{Y} \right\}$, we call $H_{N}$ as interpolation space. We have
\[
\hat{\alpha}=\underset{\alpha\in H_{N}}{\mathrm{argmin}}\left\|\alpha\right\|_{\ell_{2}}.
\]
We assume that $\mathrm{rank}(\Sigma)> N$, then $\hat{\alpha}$ exists almost surely.\par
Our loss function is squared loss, that is $\ell(t)=t^2$, and the loss of $\alpha$ is denoted by $\ell_{\alpha}=\ell\left(\left<\alpha-\alpha^{\ast},\mathbf{X} \right>\right)$. So the empirical excess risk is defined as
\[
\mathrm{P}_{N}{\mathcal{L}_{\alpha}}=\mathrm{P}_{N}\left(\ell_{\alpha}-\ell_{\alpha^{\ast}} \right).
\]

Benign overfitting depends on effective ranks of $\Sigma$. If $A\in\mathbb{R}^{p\times p}$ is a symmetric matrix, denote $\lambda_{1}(A)\geqslant\cdots\geqslant\lambda_{p}(A)$ as eigenvalues of $A$ and $s_{1}(A)\geqslant \cdots\geqslant s_{p}(A)$ be its singular values. If there is no ambiguity, we will write $s_{i}$ in stead of $s_{i}(A)$.\par
\cite{bartlett2019benign} defined two effective ranks:
\begin{eqnarray}\label{eq_effective_rank}
r_{k}(\Sigma)=\frac{\sum_{i>k}\lambda_{i}(\Sigma)}{\lambda_{k+1}},\quad R_{k}(\Sigma)=\frac{\left(\sum_{i>k}\lambda_{i}(\Sigma) \right)^2}{\sum_{i>k}\lambda_{i}^2 (\Sigma)}.
\end{eqnarray}
$R_{k}(\Sigma)$ is a truncated version of stable rank that occurred in Asymptotic Geometric Analysis, see \cite{vershynin2018high}, \cite{naor2017restricted} and \cite{mendelson2019stable} for a comprehensive review. Stable rank, denoted as $\mathrm{srank}_{q}(A)$, defined by
\[
\mathrm{srank}_{q}(A)=\left(\frac{\left\|A\right\|_{S_{2}}}{\left\|A\right\|_{S_{q}}} \right)^{\frac{2q}{q-2}},
\]
where $\left\|\cdot\right\|_{S_{q}}$ is the $q$-Schatten norm of $A$, that is to say, $\left\|A\right\|_{S_{q}}=\left(\sum_{i=1}^{p}s_{i}^q(A) \right)^{1/q}$. When $q=4$, and $A=\Sigma^{1/2}$, then
\[
\mathrm{srank}_{4}(\Sigma^{1/2})=\left(\frac{\left\|\Sigma^{1/2}\right\|_{S_{2}}}{\left\|\Sigma^{1/2}\right\|_{S_{4}}} \right)^4=\frac{\left(\sum_{i=1}^{p}s_{i}^2 (\Sigma^{1/2})\right )^2 }{\sum_{i=1}^{p}s_{i}^4 (\Sigma^{1/2})}=\frac{\left( \sum_{i=1}^{p}\lambda_{i} (\Sigma)\right )^2}{\sum_{i=1}^{p}\lambda_{i}^2 (\Sigma)}.
\]
It can be seen that $R_{k}(\Sigma)$ is the truncated version of $\mathrm{srank}_{4}(\Sigma^{1/2})$.\par
Apart from this, $r_{k}(\Sigma)$ is also a truncated version of the usual "effective rank" which is actually $\mathrm{tr}(\Sigma)/\lambda_{1}(\Sigma)$ in statistical literature, see \cite{koltchinskii2017concentration},\cite{rudelson2007sampling}.\par
In fact, our result will be in terms of $R_{k}(\Sigma)$ instead of $r_{k}(\Sigma)$, which is the usual choice in most past work. However, this does not matter, because the two effective ranks are closely related, we refer the reader to Appendix A.6 in \cite{bartlett2019benign} for a comprehensive review.\par 
For sake of simplicity, we define some extra notations. They have no special meanings, but will make our formula more clear. Denote
\begin{eqnarray}
R_{k,2}(\Sigma):=\left(1-\sqrt{\frac{k}{\mathrm{srank}_{4}(\Sigma)}} \right )R_{k}(\Sigma).
\end{eqnarray}
When $k=0$, we have $R_{k,2}(\Sigma)=\mathrm{srank}_{4}(\Sigma)$. Denote
\[
\mathfrak{R}_{k}(\Sigma):=\frac{(4p-k)^2}{8p}\frac{c^{\frac{16p^2}{(4p-k)^2}R_{k,2}(\Sigma)}}{R_{k,2}(\Sigma)},
\]
where $c<1$ is a constant.\par
Denote the operator norm of a matrix $A$ as $\left\|A\right\|$. Denote $\left\|\alpha\right\|_{A}$ as $\sqrt{\alpha^{T}A\alpha}$. We use $S(r)$ to denote the sphere in $\mathbb{R}^p$ with radius $r$ with respect to $\left\|\cdot\right\|_{\ell_{2}}$, $B(r)$ as ball analogously. Denote $S_{A}(r)$ and $B_{A}(r)$ as the corresponding sphere and ball with respect to $\left\|\cdot\right\|_{A}$. Denote $(\varepsilon_{i})_{i=1}^{N}$ are i.i.d. Bernoulli random variables. Denote $D$ as unit ball with respect to $L_{2}$ distance. If $F\subset L_{2}(\mu)$, let $\left\{G_{f}:\, f\in\mathcal{F}\right\}$ be the canonical gaussian process indexed by $F$, denote $\mathbb{E}\left \|G\right \|_{F}$ as
\[
\mathbb{E}\left \|G\right \|_{F}:=\mathrm{sup}\left \{\mathbb{E}\underset{f\in F'}{\mathrm{sup}}G_{f}:\, F'\subset F,\, F'\text{ is finite} \right \}.
\]
Denote
\[
\Lambda_{s_{0},u}(\mathcal{F})=\mathrm{inf}\underset{f\in\mathcal{F}}{\mathrm{sup}}\sum_{s\geqslant s_{0}}2^{s/2}\left\|f-\pi_{s}f\right\|_{(u^2 2^s)},\quad u\geqslant 1,\, s_{0}\geqslant 0.
\]
where the infimum is taken with respect to all admissible sequences $(F_{s})_{s\geqslant 0}$ and $\pi_{s}f$ is the nearest point in $F_{s}$ to $f$ with respect to $\left\|\cdot\right\|_{(u^2 2^s)}$. Here $\left\|\cdot\right\|_{(p)}=\mathrm{sup}_{1\leqslant q\leqslant p}\left\|\cdot\right\|_{L_{q}}/\sqrt{q}$. An admissible sequence is a sequence of partitions on $\mathcal{F}$ such that $\left|F_{s}\right|\leqslant 2^{2^s}$, and $\left|F_{0}\right|=1$, cf. \cite{mendelson2016upper}. Denote $d_{q}(F)$ as diameter of $F$ with respect to $\left\|\cdot\right\|_{L_{q}}$. Denote $\left\|\cdot\right\|_{\psi_{2}}$ as sub-gaussian norm. Denote $[N]$ as set $\left\{1,2,\cdots, N\right\}$. Denote $C, c, c_{0}, c_{1}, c_{2},\cdots$ as absolute constants.

\subsection{Structure of this paper}

Section \ref{section_preliminaries} contains some preliminaries knowledge. Section \ref{section_main} contains our main result, Theorem \ref{Theorem_main_result} and its proof are decomposed into two parts, which will be post-posed to section \ref{section_estimation_error} and section \ref{section_prediction_error}. These two sections contains estimation error and prediction error of interpolation procedure in linear regression case. In section \ref{section_discussion}, we will discuss why it is difficult to obtain oracle inequality beyond linear regression, and give a benign overfitting result without truncated effective rank by using Dvoretzky-Milman Theorem in Asymptotic Geometric Analysis.

\section{Preliminaries}\label{section_preliminaries}

In this section, we introduce some preliminary techniques which will be used to formulate and prove our main results. More precisely, we will introduce localization method to yield oracle inequality and small ball method to provide a lower bound of smallest singular value of design matrix.

\subsection{Localization Method}\label{subsection_generalization_error_interpolating_erm}
To get an oracle inequality, there are two approaches in general. The first one is called Isomorphism Method, which uses the isomorphy between between empirical and actual structures to derive an oracle inequality. The other is called localization method. In this work, we will use localization method to derive an oracle inequality.\par
For a statistical model $\mathcal{F}$ and $f^{\ast}\in\mathcal{F}$ is an oracle. Localization Method uses a $L_{2}$-ball centered at $f^{\ast}$ with radius $r$ to localize model $\mathcal{F}$. This allows us to study statistical properties of a learning procedure $\hat{f}$ on this small ball\footnote{This diameter is not necessarily to be of $L_{2}$ distance in localization method, though our choice is $L_{2}$ distance. We refer the interested reader to \cite{chinot2020statistical}}. More precisely, the radius $r$ captures upper bound estimation error $\left\|f-f^{\ast}\right\|_{L_{2}}$ for all $f\in \mathcal{F}\cap rD$. Therefore, if we can find an upper bound of $r$, we find the estimation error of learning procedure $\hat{f}$. Analog to that in \cite{chinot2020benign}, our localized set in this paper is
\[
\mathcal{F}_{H_{r,\rho}}=\left\{\left<\cdot,\alpha\right>\, :\alpha\in H_{r,\rho} \right\},\quad\text{where }H_{r,\rho}:=B(\rho)\cap B_{\Sigma}(r),
\]
where $\rho$ is upper bound of estimation error, which will be studied in section \ref{section_estimation_error}, and $r$ is upper bound of prediction error, which will be studied in section \ref{section_prediction_error}. Obtaining prediction risk is based on estimation risk. In this paper, we obtain estimation risk by studying minimum $\ell_{2}$ interpolation procedure and obtain prediction error by localization method based on it.\par
Optimal level of $r$, denoted as $r^{\ast}$ is carefully chosen by fixed points called complexity parameters.

\subsubsection{Complexity parameters}

In classical statistical learning theory, there are two common-used complexity parameters called multiplier complexity $r_{M}$ and quadratic complexity $r_{Q}$, we refer the reader to \cite{mendelson2016learning} for a comprehensive view. Quadratic complexity $r_{Q}$ is defined as follows:
\[
r_{Q,1}(\mathcal{F},\zeta)=\underset{r>0}{\mathrm{arginf}}\,\mathbb{E}\left\|G\right\|_{(\mathcal{F}-\mathcal{F})\cap rD}\leqslant \zeta r\sqrt{N},
\]
and
\[
r_{Q,2}(\mathcal{F},\zeta)=\underset{r>0}{\mathrm{arginf}}\,\mathbb{E}\underset{w\in (\mathcal{F}-\mathcal{F})\cap rD}{\mathrm{sup}}\left|\frac{1}{\sqrt{N}}\sum_{i=1}^{N}\varepsilon_{i}w(X_{i})\right|\leqslant \zeta r\sqrt{N},
\]
where $\zeta$ is an absolute constant.\par
$r_{Q}(\zeta_{1},\zeta_{2})=\mathrm{max}\left\{r_{Q,1}(\mathcal{F},\zeta_{1}),r_{Q,2}(\mathcal{F},\zeta_{2})\right\}$ is an intrinsic parameter. That is to say, $r_{Q}$ does not rely on noise $\xi$, but only on $\mathcal{F}$. This parameter measures the ability of $\mathcal{F}$ to estimate target function $f^{\ast}$.\par
While multiplier complexity $r_{M}$ is defined as follows:
\[
\phi_{N}(r):=\underset{w\in (\mathcal{F}-\mathcal{F})\cap rD}{\mathrm{sup}}\frac{1}{\sqrt{N}}\sum_{i=1}^{N}\varepsilon_{i}\xi_{i}w(X_{i}),
\]
\[
r_{M,1}(\kappa,\delta)=\underset{r>0}{\mathrm{inf}}\left\{\mathbb{P}\left\{\phi_{N}(r)\leqslant r^2 \kappa\sqrt{N} \right\}\geqslant 1-\delta\right\},
\]
and
\[
r_{0}(\kappa)=\underset{r>0}{\mathrm{inf}}\underset{w\in (\mathcal{F}-\mathcal{F})\cap rD}{\mathrm{sup}}\left\{{\left\|\xi w(X)\right\|_{L_{2}}\leqslant\frac{1}{2}\sqrt{N}\kappa r^2}\right\},
\]
where $\kappa$ is an absolute constant.\par
Then $r_{M}(\kappa,\delta)=r_{M,1}+r_{0}$ is called multiplier complexity, which measures the interplay between noise $\xi$ and function class $\mathcal{F}$.\par
Classical learning theory employs $r_{M}$ to measure the ability of $\mathcal{F}$ to absorb noise $\xi$. However, this parameter does not make sense in interpolation case. This is because interpolant $\hat{f}$ causes no loss on $r_{M}$ by interpolating $(X_{i},Y_{i})_{i=1}^{N}$ perfectly. Therefore, $r_{M}=0$ in this case. However, since $\hat{f}$ interpolates $(X_{i},Y_{i})_{i=1}^{N}$, it bears influence from noise $\xi$ so that $r_{Q}$ is no longer an intrinsic parameter. That is to say, $r_{Q}$ relies on $\xi$ implicitly because $\hat{f}$ has to estimate both signal and noise. It is this that causes the biggest difference from interpolation case and classical learning theory. Therefore, our complexity parameter is a variant of quadratic complexity, which will be defined in Equation \ref{eq_r_ast}.\par
Localization method employed complexity parameters to provide radius of localized set. However, we need to illustrate that interpolant $\hat{f}$ lies in it with high probability. This step is guaranteed by an exclusion argument.

\subsubsection{Exclusion}

For all $f\in\mathcal{F}$, if $f$ wants to be an interpolant, its empirical excess risk must be lower than a fixed level with high probability. To see this, we first decompose $\mathrm{inf}_{f\in\mathcal{F}}\mathrm{P}_{N}\mathcal{L}_{f}$ to its lower bound.\par
There are two decompositions of empirical excess risk into quadratic and multiplier components. The first one is as follows:
\begin{eqnarray}
\underset{f\in\mathcal{F}}{\mathrm{inf}}\,\mathrm{P}_{N}\mathcal{L}_{f}&=&\underset{f\in\mathcal{F}}{\mathrm{inf}}\frac{1}{N}\sum_{i=1}^{N}(f(X_{i})-Y_{i})^2-(f^{\ast}(X_{i})-Y_{i})^2\nonumber\\
&\geqslant&\underset{f\in\mathcal{F}}{\mathrm{inf}}\left \{\frac{1}{N}\sum_{i=1}^{N}(f-f^{\ast})^2(X_{i}) \right \}-2\underset{f\in\mathcal{F}}{\mathrm{sup}}\left \{\frac{1}{N}\sum_{i=1}^{N}\xi_{i}(f-f^{\ast})(X_{i})\right \}\nonumber\\
&:=&\underset{f\in\mathcal{F}}{\mathrm{inf}}\,\mathrm{P}_{N}\mathcal{Q}_{f-f^{\ast}}-2\underset{f\in\mathcal{F}}{\mathrm{sup}}\,\mathrm{P}_{N}\mathcal{M}_{f-f^{\ast}}.\label{eq_decomposition_excess_risk_first}
\end{eqnarray}
This kind of decomposition needs lower bound of quadratic component $\mathrm{P}_{N}\mathcal{Q}_{f-f^{\ast}}$. This lower bound is provided by small ball method. We will use this approach in subsection \ref{subsection_without_effective_rank} to acquire a sufficient condition for benign overfitting without truncated effective rank.
We turn to the second kind of decomposition. Our localized statistical model $\mathcal{F}=\mathcal{F}_{H_{r,\rho}}$ is a class of linear functionals on $\mathbb{R}^p$. Recall that the optimal choice of $r$ is denoted as $r^{\ast}$, so denote $\theta=r^{\ast}/r\in (0,1)$, and $\alpha_{0}=\alpha^{\ast}+\theta (\alpha-\alpha^{\ast})$, so $\left\|\Sigma^{1/2}(\alpha_{0}-\alpha^{\ast})\right\|_{\ell_{2}}=r^{\ast}$ and $\left\|\alpha_{0}-\alpha^{\ast}\right\|_{\ell_{2}}\leqslant \theta\rho$. Denote
\[
Q_{r,\rho}=\underset{\alpha-\alpha^{\ast}\in H_{r,\rho}}{\mathrm{sup}}\left|\frac{1}{N}\sum_{i=1}^{N}\left<X_{i},\alpha-\alpha^{\ast} \right>^2-\mathbb{E}\left<X_{i},\alpha-\alpha^{\ast}\right>^2 \right| ,
\]
and
\[
M_{r,\rho}=\underset{\alpha-\alpha^{\ast}\in H_{r,\rho}}{\mathrm{sup}}\left|\frac{2}{N}\sum_{i=1}^{N}\xi_{i}\left<X_{i},\alpha-\alpha^{\ast}\right> \right|.
\]
Then
\begin{eqnarray}
\underset{f\in\mathcal{F}}{\mathrm{inf}}\,\mathrm{P}_{N}\mathcal{L}_{f}&=&\underset{f\in\mathcal{F}}{\mathrm{inf}}\frac{1}{N}\sum_{i=1}^{N}(f(X_{i})-Y_{i})^2-(f^{\ast}(X_{i})-Y_{i})^2\nonumber\\
&\geqslant&\theta^{-2}\left((r^{\ast})^2- Q_{r^{\ast},\rho}\right )-2M_{r^{\ast},\rho}.\label{eq_decompose}
\end{eqnarray}

Suppose $(\xi_{i})_{i=1}^{N}$ are i.i.d. sub-gaussian random variables, then by Bernstein's inequality, with probability at least $1-\mathrm{exp}(-N/2)$ we have
\[
\frac{1}{N}\sum_{i=1}^{N}\xi_{i}^2\leqslant \frac{1}{2}\left \|\xi\right \|_{\psi_{2}}^2.
\]
Because interpolation procedure $\hat{f}$ interpolates all these inputs $(X_{i},Y_{i})_{i=1}^{N}$, the excess risk of $\hat{f}$ can be obtained by the noises,
\[
\mathrm{P}_{N}\mathcal{L}_{\hat{f}}=\mathrm{P}_{N}\left (\ell_{\hat{f}}-\ell_{f^{\ast}} \right )=-\mathrm{P}_{N}\ell_{f^{\ast}}=-\mathrm{P}_{N}\xi^2,
\]
then with probability at least $1-\mathrm{exp}(-N/2)$, we have
\[
\mathrm{P}_{N}\mathcal{L}_{\hat{f}}\leqslant -\frac{1}{2}\left \|\xi\right \|_{\psi_{2}}^2.
\]
That is to say, if $f$ wants to be an interpolant, it must satisfy this upper bound. Otherwise, $f$ will be excluded because it has little probability to be an interpolant.\par
This upper bound is different from the case of non-interpolation setting, where it is $0$. It is smaller than $0$ because the interpolation procedure is more restrict(in the sense of the interpolation space is smaller than version space) than non-interpolation procedure like ERM. The smaller upper bound can exclude more functions than non-interpolation procedure.\par

Therefore, we just need to upper bound multiplier and quadratic processes in Equation \ref{eq_decompose}, such that the lower bound of empirical excess risk over all $f\in \mathcal{F}_{H_{r,\rho}}$ is greater than $-\left\|\xi\right\|_{\psi_{2}}^2/2$ when $r$ is greater than a fixed level $r^{\ast}$. So that functions in $\mathcal{F}_{H_{r,\rho}}$ will be excluded from being an interpolant with high probability. Therefore, with high probability, interpolant will lie in $H_{r^{\ast},\rho}$ and we can upper bound its prediction risk by $r^{\ast}$.\par

\subsubsection{Multiplier process and Quadratic process}

We employ upper bounds of multiplier process and quadratic process in \cite{mendelson2016upper}:
\begin{Lemma}[Theorem 4.4 in \cite{mendelson2016upper}:Upper bound of multiplier process]\label{Lemma_upper_multiplier}
There exist absolute constants $c_{1}$ and $c_{2}$ for which the following holds. If $\xi\in L_{\psi_{2}}$ then for every $u,w\geqslant 8$, with probability at least $1-2\mathrm{exp}(-c_{1}u^2 2^{s_{0}})-2\mathrm{exp}(-c_{1}Nw^2)$,
\[
\underset{f\in\mathcal{F}}{\mathrm{sup}}\left |\frac{1}{\sqrt{N}}\sum_{i=1}^{N}\left (\xi_{i}f(X_{i})-\mathbb{E}\xi f \right ) \right |\leqslant cuw\left \|\xi\right \|_{\psi_{2}}\tilde{\Lambda}_{s_{0},u}(\mathcal{F}),
\]
where $c$ is an absolute constant.
\end{Lemma}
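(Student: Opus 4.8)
The plan is to obtain the estimate by generic chaining along an admissible sequence that is nearly optimal for $\tilde{\Lambda}_{s_0,u}(\mathcal{F})$. Fix partitions $(F_s)_{s\ge 0}$ with $|F_s|\le 2^{2^s}$, $|F_0|=1$, for which $\sup_{f\in\mathcal{F}}\sum_{s\ge s_0}2^{s/2}\|f-\pi_s f\|_{(u^2 2^s)}$ is within a factor $2$ of the infimum, where $\pi_s f$ is the nearest point of $F_s$ to $f$ in the norm $\|\cdot\|_{(p)}=\sup_{1\le q\le p}\|\cdot\|_{L_q}/\sqrt q$ with $p=u^2 2^s$. Writing $f=\pi_{s_0}f+\sum_{s>s_0}(\pi_s f-\pi_{s-1}f)$ (one may arrange $\pi_s f\to f$), the centred multiplier process splits as a root term indexed by the at most $2^{2^{s_0}}$ points of $F_{s_0}$, together with a telescoping sum of link terms; at level $s$ there are at most $|F_s|\,|F_{s-1}|\le 2^{2^{s+1}}$ distinct links $v=\pi_s f-\pi_{s-1}f$.

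The core of the argument is a single-increment concentration bound: for a fixed function $v$, a fixed level $p\ge1$, $\xi\in L_{\psi_2}$, and $t\le\sqrt p$,
\[
\mathbb{P}\left(\left|\frac{1}{\sqrt N}\sum_{i=1}^{N}\bigl(\xi_i v(X_i)-\mathbb{E}\xi v\bigr)\right|\ge c\|\xi\|_{\psi_2}\bigl(t\,\|v\|_{(p)}+w\sqrt N\,\|v\|_{L_2}\bigr)\right)\le 2e^{-ct^2}+2e^{-cNw^2}.
\]
This is proved by splitting each summand at the truncation level dictated by $p$: the contribution of the bulk part $\xi_i v(X_i)\mathbf{1}_{\{|v(X_i)|\le\tau\}}$ is controlled by a Bernstein-type inequality, since conditionally on the $X_i$ the sum is sub-gaussian with variance proxy $\|\xi\|_{\psi_2}^2\,\frac1N\sum_i v^2(X_i)$, and $\frac1N\sum_i v^2(X_i)\lesssim\|v\|_{L_2}^2$ up to an event of probability $e^{-cNw^2}$; the tail part is handled through the order statistics of $|v(X_i)|$, whose moments up to order $p$ are precisely what $\|v\|_{(p)}$ quantifies. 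This is the step that absorbs heavy tails, because $v(X)$ itself need not be sub-gaussian. Applying the bound with $p=u^2 2^s$ and $t\asymp u\,2^{s/2}$ gives, for each link $v$ at level $s$, a deviation of order $\|\xi\|_{\psi_2}\bigl(u\,2^{s/2}\|v\|_{(u^2 2^s)}+w\sqrt N\|v\|_{L_2}\bigr)$ with failure probability $2e^{-cu^2 2^s}+2e^{-cNw^2}$, and likewise for the root points at level $s_0$.

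A union bound finishes the proof. Summing the chaining tails, $\sum_{s\ge s_0}2^{2^{s+1}}e^{-cu^2 2^s}$ is geometrically decaying once $u\ge8$, hence bounded by $2e^{-c_1 u^2 2^{s_0}}$; the deviation tails $e^{-cNw^2}$ are $s$-independent and, after a geometric sum in $s$, contribute $2e^{-c_1 Nw^2}$, which forces $w\ge8$. On the complementary event, summing the link bounds and using $\|\pi_s f-\pi_{s-1}f\|_{(u^2 2^s)}\le\|f-\pi_s f\|_{(u^2 2^s)}+\|f-\pi_{s-1}f\|_{(u^2 2^s)}$ yields
\[
\sup_{f\in\mathcal{F}}\left|\frac{1}{\sqrt N}\sum_{i=1}^N\bigl(\xi_i f(X_i)-\mathbb{E}\xi f\bigr)\right|\le c\,uw\,\|\xi\|_{\psi_2}\sup_{f\in\mathcal{F}}\sum_{s\ge s_0}2^{s/2}\|f-\pi_s f\|_{(u^2 2^s)},
\]
and taking the infimum over admissible sequences replaces the right-hand side by $c\,uw\,\|\xi\|_{\psi_2}\tilde{\Lambda}_{s_0,u}(\mathcal{F})$. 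Strictly this is Theorem 4.4 of \cite{mendelson2016upper}, which we simply invoke in the sequel; the sketch above only records the mechanism.

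\textbf{Main obstacle.} The delicate point is the single-increment bound, and in particular matching the truncation level (moment order) $p=u^2 2^s$ to the union-bound cost $2^{2^s}$ at scale $s$, so that the chaining parameter $u$ and the global deviation parameter $w$ decouple into the two separate exponential terms of the statement, with the $\psi_2$-truncated norm $\|v\|_{(u^2 2^s)}$ appearing with the right constant. This is exactly what later makes the bound usable under only a small-ball assumption rather than full concentration.
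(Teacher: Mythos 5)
The paper does not prove Lemma~\ref{Lemma_upper_multiplier}; it is imported verbatim as Theorem~4.4 of \cite{mendelson2016upper} and used as a black box, so there is no in-paper argument to compare against. Your sketch is a fair high-level account of Mendelson's generic-chaining mechanism: a nearly optimal admissible sequence at scale $p=u^2 2^s$, a single-increment concentration bound that trades moment information (via $\|\cdot\|_{(p)}$) for a sub-gaussian-looking tail, and a union bound over links whose cost $2^{2^{s+1}}$ is beaten by the $e^{-cu^2 2^s}$ factor once $u$ is a large enough absolute constant. You also correctly identify why this result is the right tool in the paper's heavy-tailed setting: $f(X)$ is never assumed sub-gaussian, only to have controlled moments.

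One imprecision worth flagging: the claim that the $e^{-cNw^{2}}$ tails ``after a geometric sum in $s$, contribute $2e^{-c_1 Nw^2}$'' does not work if that tail is paid per link, since $\sum_{s\ge s_0}2^{2^{s+1}}e^{-cNw^2}$ diverges for fixed $w$. In the actual argument the $w$-deviation comes from a single global event (roughly, a bound on the empirical $L_2$ profile of the increments over the whole sample $(X_i)$), not from a per-link union bound, which is why it appears once with failure probability $2e^{-c_1 Nw^2}$ rather than being multiplied by the net cardinalities. As written your aggregation step would not close. Since the paper simply invokes the cited theorem, this gap does not affect the paper's use of the lemma, but it should be corrected if the sketch is intended to stand on its own.
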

\begin{Lemma}[Theorem 1.13 in \cite{mendelson2016upper}: Upper bound of empirical process]\label{Lemma_quadratic_process}
There exists a constant $c(q)$ that depends only on $q$ for which the following holds. Then with probability at least $1-2\mathrm{exp}\left(-c_{0}u^2 2^{s_{0}} \right )$,
\[
\underset{f\in\mathcal{F}}{\mathrm{sup}}\left |\frac{1}{N}\sum_{i=1}^{N}\left (f^2(X_{i})-\mathbb{E}f^2 \right ) \right |\leqslant \frac{c(q)}{N}\left(u^2\tilde{\Lambda}_{s_{0},u}^2 (\mathcal{F})+u\sqrt{N}d_{q}(\mathcal{F})\tilde{\Lambda}_{s_{0},u}(\mathcal{F}) \right ),
\]
where $c(q)$ is a constant depending on $q$. Particularly, if $\mathcal{F}$ is a sub-gaussian class, then with probability at least $1-2\mathrm{exp}(-c_{0}N)$,
\[
\underset{f\in\mathcal{F}}{\mathrm{sup}}\left |\frac{1}{N}\sum_{i=1}^{N}\left (f^2(X_{i})-\mathbb{E}f^2 \right ) \right |\leqslant cL^2\mathbb{E}\left\|G\right\|_{\mathcal{F}}^2.
\]
\end{Lemma}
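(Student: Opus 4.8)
The two displayed inequalities are Theorem~1.13 of \cite{mendelson2016upper} (and Lemma~\ref{Lemma_upper_multiplier} is Theorem~4.4 there), so the plan is to recall how that generic-chaining argument runs; I describe it for the empirical/quadratic process, the multiplier bound being the same scheme with the noise $\xi_i$ playing the role of one of the two copies of $f$. The only non-routine feature is that $t\mapsto t^2$ is not Lipschitz, so the contraction principle is unavailable and the chain has to be run on a product process rather than on $f^2$ directly.

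First I would symmetrize, so that it suffices to bound $\sup_{f\in\mathcal{F}}\bigl|\frac1N\sum_{i=1}^N\varepsilon_i\bigl(f^2-g^2\bigr)(X_i)\bigr|$ for a fixed reference $g\in\mathcal{F}$ (one may take $g=0$ once the class is centered), and then use the identity $f^2-g^2=(f-g)(f+g)$. This turns the quadratic empirical process, up to absolute constants, into a \emph{product} empirical process: the first factor $f-g$ is the quantity driven to zero by the chaining, while the second factor $f+g$ is merely controlled in $L_q$, namely $\|f+g\|_{L_q}$ is at most a constant multiple of $d_q(\mathcal{F})$ — this is where $d_q(\mathcal{F})$ enters.

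Next I would fix an admissible sequence $(F_s)_{s\geqslant 0}$ almost realizing the infimum defining $\tilde{\Lambda}_{s_0,u}(\mathcal{F})$ and chain each $f$ from $\pi_{s_0}f$ through the links $\pi_s f\to\pi_{s+1}f$, $s\geqslant s_0$. For one link one applies a Bernstein-type deviation inequality to $\frac1N\sum_i\varepsilon_i(\pi_{s+1}f-\pi_s f)(X_i)(f+g)(X_i)$: since $\|\pi_{s+1}f-\pi_s f\|_{(u^2 2^s)}$ is small, this product variable is sub-exponential at scale $\|\pi_{s+1}f-\pi_s f\|_{(u^2 2^s)}\,d_q(\mathcal{F})$, while its purely quadratic part sits at scale $\|\pi_{s+1}f-\pi_s f\|_{(u^2 2^s)}^2$. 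There are at most $2^{2^{s+1}}$ links at level $s$, and the norm index $u^2 2^s$ is calibrated precisely so that the union bound over level-$s$ links costs $\exp(-c\,u^2 2^s)$; summing this geometric series over $s\geqslant s_0$ gives the stated failure probability $2\exp(-c_0 u^2 2^{s_0})$. Adding the per-link estimates along the chain, and keeping track of the $2^{s/2}$ weights built into $\tilde{\Lambda}$ and of the $1/N$ normalization, the quadratic contributions sum to $c(q)u^2\tilde{\Lambda}_{s_0,u}^2(\mathcal{F})/N$ and the product contributions to $c(q)u\,d_q(\mathcal{F})\tilde{\Lambda}_{s_0,u}(\mathcal{F})/\sqrt N$, which is the first inequality.

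For the sub-gaussian corollary I would use that on $\mathcal{F}-\mathcal{F}$ every norm $\|\cdot\|_{(p)}$ is equivalent to $L\|\cdot\|_{L_2}$; hence $\tilde{\Lambda}_{0,u}(\mathcal{F})\lesssim uL\,\gamma_2(\mathcal{F},L_2)$, which by Talagrand's majorizing measures theorem is $\asymp uL\,\mathbb{E}\|G\|_{\mathcal{F}}$, and likewise $d_q(\mathcal{F})\lesssim L\,\mathbb{E}\|G\|_{\mathcal{F}}$. Choosing $u\asymp\sqrt N$ collapses both terms of the general bound to $cL^2\mathbb{E}\|G\|_{\mathcal{F}}^2$ with probability $1-2\exp(-c_0 N)$. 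I expect the main obstacle to be exactly the per-link estimate: one must show, uniformly along the admissible sequence, that a product of a variable with small $\|\cdot\|_{(p)}$-norm and one bounded in $L_q$ obeys the mixed sub-gaussian/sub-exponential tail that makes the two-term Bernstein bound tight, and then match that tail against the cardinalities $|F_s|\leqslant 2^{2^s}$ so that the $2^{s/2}$ factors in $\tilde{\Lambda}$ simultaneously absorb the chaining increments and the union-bound exponent.
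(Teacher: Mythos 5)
The paper itself gives no proof of this lemma: it is imported verbatim as Theorem~1.13 of \cite{mendelson2016upper} and invoked as a black box, so there is no in-paper argument to compare your sketch against. What you have written is a reasonable reconstruction of Mendelson's generic-chaining proof, and it correctly identifies the two non-routine ingredients: the reduction of the quadratic empirical process to a \emph{product} empirical process (since $t\mapsto t^2$ is not Lipschitz, the contraction principle is unavailable), and the calibration of the $\|\cdot\|_{(u^2 2^s)}$-norms against the cardinalities $|F_s|\leqslant 2^{2^s}$ so that the Bernstein tail per link and the union bound over links at level $s$ balance at $\exp(-c\,u^2 2^s)$, summing geometrically to the stated failure probability $2\exp(-c_0 u^2 2^{s_0})$. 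The sub-gaussian corollary via norm equivalence, the majorizing measures theorem, and the choice $u\asymp\sqrt N$ is likewise exactly how the bound is specialized.

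One imprecision worth flagging: in your per-link estimate you bound $\frac1N\sum_i\varepsilon_i(\pi_{s+1}f-\pi_s f)(X_i)(f+g)(X_i)$, but the second factor $f+g$ is not determined by the link $(\pi_s f,\pi_{s+1}f)$ — it still ranges over the whole class — so this is not a single random variable and Bernstein cannot be applied to it directly. In Mendelson's argument the second factor is itself a chained quantity (one chains both coordinates of the product process, or equivalently takes $\pi_{s+1}f+\pi_{s_0}f$ as the second factor), so that each per-link quantity is indexed by finitely many cells and the union bound is legitimate; the $d_q(\mathcal{F})$ term then enters as a uniform $L_q$ bound on the chained second factor. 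With that correction your scheme matches the source.
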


Set $2^{s_{0}}=k_{\mathcal{F}}$, where $k_{\mathcal{F}}=\left (\mathbb{E}\left\|G\right\|_{\mathcal{F}}/d_{2}(\mathcal{F}) \right )^2$ is the Dvoretzky-Milman Dimension of $\mathcal{F}$, we refer the reader to \cite{artstein-avidan2015asymptotic} for a comprehensive view. For $\ell_{2}^{p}$, the Dvoretzky-Milman dimension $k\sim p$, see e.g. Theorem 5.4.1 in \cite{artstein-avidan2015asymptotic}.
And $\tilde{\Lambda}(\mathcal{F})$ is called $\Lambda$-complexity of $\mathcal{F}$, which is a generalization of Gaussian complexity so that $\Lambda$-complexity just needs $\mathcal{F}$ has finite order of moments, instead of infinite order of moments. Particularly, when $\mathcal{F}$ happens to be a sub-gaussian class, $\tilde{\Lambda} (\mathcal{F})$ is equivalent to $\mathbb{E}\left\|G\right\|_{\mathcal{F}}$. We refer the reader to \cite{mendelson2016upper} for a comprehensive review.\par
In this paper, our function class is of linear functionals on $\mathbb{R}^p$, especially ellipses since we assume the random vector $X$ is not isotropic. Given the covariance matrix $\Sigma$ of random vector $X$. By Lemma 5 in \cite{chinot2020benign}, we obtain
\begin{eqnarray}\label{eq_gamma2_diameter_ellipsoid}
\mathbb{E}\left\|G\right\|_{\mathcal{F}_{H_{r,\rho}}}\leqslant \sqrt{2}\sqrt{\sum_{i=1}^{p} \mathrm{min}\left\{\lambda_{i}(\Sigma)\rho^2,r^2 \right \}}.
\end{eqnarray}

However, estimating $\tilde{\Lambda}(H_{r,\rho})$ is non-trivial unless $H_{r,\rho}$ is a sub-gaussian class. 
Note that the deviation in Lemma \ref{Lemma_quadratic_process} is neither optimal nor user-friendly(in the sense of the deviation parameter $u$ is coupled with complexity parameter $\mathbb{E}\left\|G\right\|_{\mathcal{F}}$). In fact, upper bound of quadratic process given by \cite{dirksen2015tail} is in optimal deviation when $\mathcal{F}$ is a sub-gaussian class. This is not fit to our heavy-tailed setup. It is non-trivial to obtain upper bound of quadratic process in heavy-tailed case. However, when $\mathcal{F}$ is sub-gaussian, we can omit parameter $r_{2}^{\ast}$ which will be defined in section \ref{section_main} and get a better bound of $\left\|\Gamma\right\|$ in subsection \ref{subsection_lower_bound_smallest_singular_value}, which will generate a preciser result, whereas the proof is omitted. To make our result uniform to both heavy-tailed and sub-gaussian case, we employ the one from \cite{mendelson2016upper} though it will not generate an optimal bound.

\subsection{Small Ball Method}

To deal with heavy-tailed case, we employ small ball method, which is a crucial argument in Asymptotic Geometric Analysis, see \cite{artstein-avidan2015asymptotic}. Small Ball Method in statistical learning theory is first developed in \cite{koltchinskii2015bounding}. It can be viewed as a kind of Paley-Zygmund method, which assumes the random vector is sufficiently spread, so that it will have many large coordinates. We refer the reader to \cite{mendelson2016learning} and \cite{mendelson2019stable} for a comprehensive view.\par
Classical small ball assumption is a lower bound on tail of random function, that is
\[
\mathbb{P}\left\{\left|X_{i}\right|\geqslant\kappa\left\|X\right\|_{L_{2}} \right\}\geqslant \theta,
\]
which can be verified by Paley-Zygmund inequality, see Lemma 3.1 in \cite{kallenberg2002foundations}, under $L_{4}-L_{2}$ norm equivalence condition. In this paper, small ball method is used to obtain lower bound of smallest singular value of design matrix. In statistical learning theory, small ball assumption is used to lead to lower bound of quadratic component in Equation \ref{eq_decomposition_excess_risk_first}, so that it can provide a lower bound of smallest singular value. As we do not need coordinates of input vector $X$ are independent, we need a small ball method without independent. Fortunately, independence assumption is relaxed in \cite{mendelson2019stable}, and the corresponding definition of small-ball assumption is as follows:
\begin{definition}[Small Ball Assumption:\cite{mendelson2019stable}]\label{defi_wSBA}
The random vector $X\in\mathbb{R}^p$ satisfies a weak small ball assumption(denoted as wSBA) with constants $\mathcal{L}, \kappa$ if for every $1\leqslant k\leqslant p-1$, every $k$ dimensional subspace $F$, every $z\in\mathbb{R}^p$,
\[
\mathbb{P}\left\{\left\|P_{F}X-z\right\|_{\ell_{2}}\leqslant\kappa\sqrt{k} \right\}\leqslant \left(\mathcal{L}\kappa\right)^k,
\]
where $P_{F}$ is the orthogonal projection onto the subspace $F$.
\end{definition}
There are many cases when random vector $X$ satisfying wSBA, we refer the reader to Appendix A in \cite{mendelson2019stable} for a comprehensive view.

\section{Main Result}\label{section_main}

In this section, we will formulate our main result, Theorem \ref{Theorem_main_result}. Before this, we have to assume our final assumption and define some parameters.\par
Firstly, We need to following assumption: There are constants $\delta_{1}>0$ and $\delta_{2}\geqslant 1$ such that
\begin{eqnarray}\label{eq_paley_zygmund}
\left (\frac{1}{p}\sum_{i=1}^{p}\left\|\Sigma^{1/2}e_{i}\right\|_{\ell_{2}}^{2+\delta_{1}} \right )^{\frac{1}{2+\delta_{1}}}\leqslant \delta_{2}\sqrt{\frac{\mathrm{tr}(\Sigma)}{p}},
\end{eqnarray}
where $(e_{i})_{i=1}^{p}$ are ONB of $\mathbb{R}^p$.\par
This assumption is used to select a proper(in sense of a uniform lower bound of inner product) subset $\sigma_{0}\subset [p]$ such that $\left|\sigma_{0}\right|\geqslant c_{0}p$, where $c_{0}$ depends on $\delta_{1}$ and $\delta_{2}$. This assumption is not restrictive. See subsection \ref{subsection_example} for an example.\par
Secondly, we define three parameters. Define $k^{\ast}$ as the smallest integer such that
\begin{eqnarray}\label{eq_k_ast}
p\log{\left (1+\frac{\sqrt{d_{q}(D) \frac{\tilde{\Lambda}(D)}{\sqrt{p}} +\frac{\tilde{\Lambda}^2(D)}{p}+\lambda_{1}(\Sigma)}\sqrt{\frac{p}{\mathrm{tr}(\Sigma)}}}{\sqrt{3c_{0}(1-\mathfrak{R}_{k}(\Sigma))-1}} \right )}
\leqslant N\frac{c_{0}\mathfrak{R}_{k}(\Sigma)+1-c_{0}}{2}+\log{p}
\end{eqnarray}
where the minimum of empty set is defined as $\infty$. Denote $\nu$ as follows:
\begin{eqnarray}\label{eq_nu}
\nu:=N\frac{c_{0}\mathfrak{R}_{k^{\ast}}(\Sigma)+1-c_{0}}{2}+\log{p}-p\log{\left (1+\frac{\sqrt{d_{q}(D) \frac{\tilde{\Lambda}(D)}{\sqrt{p}} +\frac{\tilde{\Lambda}^2(D)}{p}+\lambda_{1}(\Sigma)}\sqrt{\frac{p}{\mathrm{tr}(\Sigma)}}}{\sqrt{3c_{0}(1-\mathfrak{R}_{k^{\ast}}(\Sigma))-1}} \right )}.
\end{eqnarray}
Parameter $k^{\ast}$ is a level which can balance the two sides in Equation \ref{eq_k_ast}.\par
Denote
\begin{eqnarray}\label{eq_rho}
\rho=\left\|\alpha^{\ast}\right\|_{\ell_{2}}+\sqrt{\frac{2}{3(1-c_{0})c_{0}-1}}\sqrt{\frac{p}{\mathrm{tr}(\Sigma)}}\frac{\left\|\xi\right\|_{\psi_{2}}}{\varepsilon},
\end{eqnarray}
where $\varepsilon$ is a constant. $\rho$ will be upper bound of estimation error.\par
Denote
\[
r_{1}^{\ast}:=\underset{r>0}{\mathrm{arginf}}\left\{\tilde{\Lambda}(H_{r,\rho})\leqslant \sqrt{\zeta_{1} p}r  \right\},\quad r_{2}^{\ast}:=\underset{r>0}{\mathrm{arginf}}\left\{d_{q}(\mathcal{F}_{r,\rho})\leqslant \zeta_{2}r \right\}.
\]
\begin{eqnarray}\label{eq_r_ast}
r^{\ast}:=r_{1}^{\ast}+r_{2}^{\ast},
\end{eqnarray}
where $\zeta_{1},\zeta_{2}$ are absolute constants. Particularly, when $H_{r,\rho}$ is a sub-gaussian class, this definition reduces to that of \cite{chinot2020benign}. $r^{\ast}$ will be upper bound of prediction risk.\par
Now we can formulate our main result as follows.
\begin{thm}\label{Theorem_main_result}
Suppose $X=\Sigma^{1/2}Z\in\mathbb{R}^p$ is a random vector, where $Z$ is an isotropic random vector that satisfies wSBA with constants $\mathcal{L},\kappa$, and $\Sigma$ satisfies Equation \ref{eq_paley_zygmund}. If $(X_{i})_{i=1}^{N}$ are i.i.d. copies of $X$, forming rows of a random matrix $\mathbf{X}$. Let $\hat{\alpha}$ be an interpolation solution on $\left(X_{i},Y_{i} \right )_{i=1}^{N}$, where $Y_{i}=\left<X_{i},\alpha^{\ast}\right>+\xi_{i}$, and $(\xi_{i})_{i=1}^{N}$ are i.i.d. sub-gaussian random variables. Then there exists absolute constant $c$ such that: with probability at least $1-\mathrm{exp}(-\nu)-\mathrm{exp}(-cN)$,
\[
\left\|\hat{\alpha}-\alpha^{\ast}\right\|_{\ell_{2}}\leqslant \rho,\quad \left\|\Sigma^{1/2}\left(\hat{\alpha}-\alpha^{\ast}\right)\right\|_{\ell_{2}}\leqslant r^{\ast},
\]
where $\rho$, $\nu$ and $r^{\ast}$ are defined in Equation \ref{eq_rho}, \ref{eq_nu} and \ref{eq_r_ast}
\end{thm}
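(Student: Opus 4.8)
The plan is to establish the two inequalities separately, using the two complementary tools assembled in Section~\ref{section_preliminaries}: the estimation bound $\|\hat\alpha-\alpha^\ast\|_{\ell_2}\le\rho$ will follow from the minimum-$\ell_2$-norm characterisation of $\hat\alpha$ together with a small-ball lower bound on the smallest non-zero singular value of $\mathbf X$, while the prediction bound $\|\Sigma^{1/2}(\hat\alpha-\alpha^\ast)\|_{\ell_2}\le r^\ast$ will follow from the localisation-plus-exclusion scheme fed by the multiplier and quadratic process estimates of Lemmas~\ref{Lemma_upper_multiplier} and~\ref{Lemma_quadratic_process}. The two bad events are controlled separately and union-bounded, which is exactly what produces the probability $1-\mathrm{exp}(-\nu)-\mathrm{exp}(-cN)$ in the conclusion, $\mathrm{exp}(-\nu)$ coming from the singular-value/net step and $\mathrm{exp}(-cN)$ from the sub-gaussian noise concentration together with the process bounds.

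For the estimation error, write $\hat\alpha=\mathbf X^{\dagger}\mathbf Y=\mathbf X^{\dagger}\mathbf X\alpha^\ast+\mathbf X^{\dagger}\xi$; since $\mathbf X^{\dagger}\mathbf X$ is the orthogonal projection onto the row space of $\mathbf X$, writing $P^{\perp}$ for the complementary projection this gives $\hat\alpha-\alpha^\ast=\mathbf X^{\dagger}\xi-P^{\perp}\alpha^\ast$, hence $\|\hat\alpha-\alpha^\ast\|_{\ell_2}\le\|\alpha^\ast\|_{\ell_2}+\|\xi\|_{\ell_2}/s_{N}(\mathbf X)$. The noise term satisfies $\|\xi\|_{\ell_2}^2\le\tfrac N2\|\xi\|_{\psi_2}^2$ with probability $1-\mathrm{exp}(-N/2)$ by Bernstein's inequality, as already recorded above. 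The heart of this part is a lower bound of the form $s_{N}^2(\mathbf X)\gtrsim \varepsilon^2\,(3(1-c_0)c_0-1)\,N\,\mathrm{tr}(\Sigma)/p$: I would use assumption~\eqref{eq_paley_zygmund} to extract a coordinate subset $\sigma_0\subset[p]$, $|\sigma_0|\ge c_0 p$, along which $\|\Sigma^{1/2}e_i\|_{\ell_2}^2$ is uniformly comparable to $\mathrm{tr}(\Sigma)/p$; reduce $\inf_{\|v\|_{\ell_2}=1}\sum_{i}\langle X_i,v\rangle^2$ to a coordinate small-ball estimate on $Z=\Sigma^{-1/2}X$ via Definition~\ref{defi_wSBA}; and close the argument with an $\varepsilon$-net over the sphere, where the operator-norm deviation of the sample second-moment matrix (the quantity $\|\Gamma\|$) is controlled through $\tilde\Lambda(D)$ and $d_q(D)$. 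The net cardinality produces the $p\log(1+\cdots)$ term in~\eqref{eq_k_ast}, the small-ball exponent produces the $N(c_0\mathfrak R_k(\Sigma)+1-c_0)/2+\log p$ term, and choosing $k=k^\ast$ leaves a net surplus equal to $\nu$; substituting the two bounds into the displayed inequality yields $\rho$ as defined in~\eqref{eq_rho}.

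For the prediction error, I would argue by exclusion on the localised class $\mathcal F_{H_{r,\rho}}$ with $H_{r,\rho}=B(\rho)\cap B_\Sigma(r)$. By the decomposition~\eqref{eq_decompose}, any $\alpha$ with $\|\Sigma^{1/2}(\alpha-\alpha^\ast)\|_{\ell_2}=r>r^\ast$ and $\|\alpha-\alpha^\ast\|_{\ell_2}\le\rho$ satisfies $\mathrm P_N\mathcal L_{f_\alpha}\ge\theta^{-2}((r^\ast)^2-Q_{r^\ast,\rho})-2M_{r^\ast,\rho}$ with $\theta=r^\ast/r<1$. I would bound the quadratic process $Q_{r^\ast,\rho}$ by Lemma~\ref{Lemma_quadratic_process} with $2^{s_0}=k_{\mathcal F}\sim p$ and the defining inequalities of $r_1^\ast$ and $r_2^\ast$, so that $Q_{r^\ast,\rho}\le\tfrac12(r^\ast)^2$, and the multiplier process $M_{r^\ast,\rho}$ by Lemma~\ref{Lemma_upper_multiplier} together with the $\Lambda$-complexity of the localised ellipsoid, which in the sub-gaussian case collapses to the Gaussian-complexity bound~\eqref{eq_gamma2_diameter_ellipsoid}. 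This forces $\mathrm P_N\mathcal L_{f_\alpha}>-\tfrac12\|\xi\|_{\psi_2}^2$ uniformly over all such $\alpha$ on an event of probability $1-\mathrm{exp}(-cN)$; since the interpolant obeys $\mathrm P_N\mathcal L_{\hat\alpha}=-\mathrm P_N\xi^2\le-\tfrac12\|\xi\|_{\psi_2}^2$ on that same event, it cannot lie outside $B_\Sigma(r^\ast)$, which is the second conclusion.

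I expect the main obstacle to be the anisotropic small-ball / smallest-singular-value step. The classical coordinate small-ball estimate is stated for isotropic vectors, so the ``simple modification'' of the abstract has to be pushed through with $\Sigma$ present, and -- more delicately -- the $\varepsilon$-net cardinality and the exponential small-ball probability must be balanced so that their ratio, governed by the truncated stable rank through $R_{k,2}(\Sigma)$ and $\mathfrak R_k(\Sigma)$, stays positive; this is precisely why $k^\ast$ and $\nu$ are defined as in~\eqref{eq_k_ast}--\eqref{eq_nu}, and making the constants $c_0$, $\varepsilon$ and $3(1-c_0)c_0-1$ line up is where essentially all the work lies. A secondary difficulty is controlling $\tilde\Lambda(H_{r,\rho})$ in the genuinely heavy-tailed regime, where it is not equivalent to $\mathbb E\|G\|_{\mathcal F_{H_{r,\rho}}}$ and one must use the generic-chaining form of Lemma~\ref{Lemma_quadratic_process} rather than its sub-gaussian special case.
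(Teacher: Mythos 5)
Your proposal follows essentially the same route as the paper: estimation error via the Moore--Penrose decomposition $\hat\alpha=\mathbf X^{\dagger}\mathbf X\alpha^\ast+\mathbf X^{\dagger}\xi$ combined with a small-ball lower bound on $s_{\min}(\mathbf X)$ (Paley--Zygmund coordinate selection, restricted-invertibility/effective-rank step behind Theorem~\ref{theo_coordinate_small_ball}, $\varepsilon$-net, and control of $\|\Gamma\|$ via Lemma~\ref{Lemma_quadratic_process}); and prediction error via the localisation/exclusion scheme of Lemma~\ref{Lemma_localization}, the decomposition~\eqref{eq_decompose}, and the process bounds of Lemmas~\ref{Lemma_upper_multiplier}--\ref{Lemma_quadratic_process}, exactly as in Sections~\ref{section_estimation_error} and~\ref{section_prediction_error}. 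Your book-keeping of which bad event contributes $\exp(-\nu)$ and which contributes $\exp(-cN)$ also matches the paper, and your first display (using $\mathbf X^{\dagger}\mathbf X=P_{\mathrm{row}(\mathbf X)}$, hence a triangle inequality) is in fact slightly more careful than the paper's Equation~\eqref{eq_localized_estimation_error}, which writes an equality where only $\leqslant$ is justified.
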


\subsection{Example}\label{subsection_example}

Consider a simple example considered in \cite{bartlett2019benign}, \cite{chinot2020benign} and \cite{tsigler2020benign}. When $X$ is a sub-gaussian random vector, we have $\tilde{\Lambda}(\mathcal{F})\sim\mathbb{E}\left\|G\right\|_{\mathcal{F}}$ at once. Therefore, with probability at least $1-\mathrm{exp}(-cN)$, we have
\[
\left\|\Gamma\right\|\lesssim \sqrt{N}\lambda_{1}(\Sigma).
\]
Consider a concrete case that $\varepsilon=o(1)$ such that for any $k$,
\[
\lambda_{k}(\Sigma)=\mathrm{e}^{-k}+\varepsilon,\quad \text{with}\quad \log{\frac{1}{\varepsilon}}<N,\quad p=cN\log{\frac{1}{\varepsilon}}.
\]
If $p\varepsilon =\omega (1)$, then $\mathrm{tr}(\Sigma)=O(1)$, and $d_{q}(D) \frac{\tilde{\Lambda}(\mathcal{F})}{\sqrt{p}} +\frac{\tilde{\Lambda}^2(\mathcal{F})}{p}+\lambda_{1}(\Sigma)=O(1)$. So
\[
\sqrt{\frac{p}{\mathrm{tr}(\Sigma)}}\sqrt{d_{q}(D) \frac{\tilde{\Lambda}(D)}{\sqrt{p}} +\frac{\tilde{\Lambda}^2(D)}{p}+\lambda_{1}(\Sigma)}=O(1).
\]
To choose $k$ such that Equation \ref{eq_k_ast} holds, we have to bound $\mathfrak{R}_{k}(\Sigma)$ from below. Firstly, we need to lower bound $R_{k}(\Sigma)$.
\[
R_{k}(\Sigma)=\Theta\left (\frac{\left(\mathrm{e}^{-k}+p\varepsilon \right )^2}{\mathrm{e}^{-2k}+p\varepsilon^2} \right )
\]
by setting $k=\log{(1/\varepsilon)}<\frac{p}{2}$, then $R_{k}(\Sigma)=\Theta (p)$.\par
Then we estimate $\mathrm{srank}_{4}(\Sigma)$. We have $\mathrm{srank}_{4}(\Sigma)=\Theta (p)$. So $R_{k,2}(\Sigma)=\Theta (p)$. Further, we have $\mathfrak{R}_{k}(\Sigma)=\Theta (2^{-p})$. Further, $2^{-p}=\Theta (\varepsilon^{cN})$.\par
Therefore,
\begin{eqnarray}
\mathrm{LHS}/p&=&\Theta\left(\log{\left(1+\frac{1}{\sqrt{3c_{0}(1-2^{-p})-1}}\right)} \right)\nonumber\\
&=&\Theta\left(-\log{\left(3c_{0}(1-\varepsilon^{cN})-1 \right)} \right)\nonumber\\
&=&\Theta\left(-\log{\left(1-\varepsilon^{cN} \right)} \right).\nonumber
\end{eqnarray}
For the right hand side of Equation \ref{eq_k_ast}, we have
\[
\frac{N}{p}\frac{c_{0}\mathfrak{R}_{k}(\Sigma)+1-c_{0}}{2}=\Theta\left(\frac{1-c_{0}(1-2^{-p})}{2c\log{(1/\varepsilon)}} \right)=\Theta\left(\varepsilon^{cN} \right),
\]
and
\[
\frac{\log{p}}{p}=\Theta\left(\frac{\log{c}+\log{N}+\log{\log{(1/\varepsilon)}}}{cN\log{(1/\varepsilon)}}\right)=\Theta\left(\frac{\log{N}}{N} \right).
\]
Recall that $\varepsilon=o(1)$, so Equation \ref{eq_k_ast} holds for $N$ large enough.\par
Consider $\delta_{2}$ such that $\delta_{2}^2\geqslant 1+1/\varepsilon$, then Equation \ref{eq_paley_zygmund} holds. Therefore, we can set
\[
\nu=\frac{\log{N}}{N}-\varepsilon^{cN}+\log{\left(1-\varepsilon^{cN} \right)},
\]
Next, we estimate $r^{\ast}$. Since $\mathcal{F}_{H_{r,\rho}}$ is a sub-gaussian class, $r_{2}^{\ast}=0$, and
\[
r_{1}^{\ast}=\underset{r>0}{\mathrm{arginf}}\left\{\sum_{i=1}^{p}\mathrm{min}\left\{r^2,\lambda_{i}(\Sigma)\rho^2 \right\}\leqslant \zeta_{1}pr^2 \right\}.
\]
So $r^{\ast}=r_{1}^{\ast}\leqslant \frac{2}{\sqrt{\zeta_{1}}}\left\|\alpha^{\ast}\right\|_{\ell_{2}}\sqrt{\frac{\mathrm{tr}(\Sigma)}{p}}$. Therefore, by Theorem \ref{Theorem_main_result}, with probability at least $1-\mathrm{exp}(-\nu)-\mathrm{exp}(-cN)$, we have
\begin{eqnarray}
\left\|\hat{\alpha}-\alpha^{\ast}\right\|_{\ell_{2}}&\leqslant& \left\|\alpha^{\ast}\right\|_{\ell_{2}}+C\left\|\xi\right\|_{\psi_{2}}\sqrt{\frac{p}{p\varepsilon+1}}\leqslant c\left\|\alpha^{\ast}\right\|_{\ell_{2}},\nonumber\\
 \left\|\Sigma^{1/2}(\hat{\alpha}-\alpha^{\ast})\right\|_{\ell_{2}}^2&\leqslant& \frac{4}{\zeta_{1}}\left\|\alpha^{\ast}\right\|_{\ell_{2}}^2\left( \frac{p\varepsilon+1}{p}\right )=\frac{4}{\zeta_{1}}\left\|\alpha^{\ast}\right\|_{\ell_{2}}^2\left(\varepsilon+\frac{1}{c\log{(1/\varepsilon)}N} \right ),\nonumber
\end{eqnarray}
if signal-to-noise ratio $\left\|\alpha^{\ast}\right\|_{\ell_{2}}/\left\|\xi\right\|_{\psi_{2}}$ is greater than $\sqrt{\frac{p}{p\varepsilon+1}}$.

\section{Estimation Error}\label{section_estimation_error}

In this section, we are going to obtain a upper bound of $\left\|\hat{\alpha}-\alpha^{\ast}\right\|_{\ell_{2}}$ in high probability. We have
\[
\hat{\alpha}=\mathbf{X}^{\dagger}\mathbf{Y}=\mathbf{X}^{\dagger}\mathbf{X}\alpha^{\ast}+\mathbf{X}^{\dagger}\xi.
\]
Therefore,
\begin{eqnarray}\label{eq_localized_estimation_error}
\left\|\hat{\alpha}-\alpha^{\ast}\right\|_{\ell_{2}}=\left\|\left(\mathbf{X}^{\dagger}\mathbf{X}-I\right)\alpha^{\ast}\right\|_{\ell_{2}}+\left\| \mathbf{X}^{\dagger}\xi\right\|_{\ell_{2}}\leqslant \left\|\alpha^{\ast}\right\|_{\ell_{2}}+\left\|\mathbf{X}^{\dagger}\right\|\left\|\xi\right\|_{\ell_{2}}.
\end{eqnarray}
For $\left\|\xi\right\|_{\ell_{2}}$, we can obtain
\[
\left\|\xi\right\|_{\ell_{2}}\leqslant \sqrt{N}\left\|\xi\right\|_{\psi_{2}},
\]
with probability at least $1-\mathrm{exp}(-N)$ by Bernstein's inequality.\par

To upper bound $\left\|\mathbf{X}^{\dagger}\right\|$, we need a lower bound of the smallest singular value of $\mathbf{X}$ in high probability.
\begin{Lemma}[Lower bound of the smallest singular value]\label{Lemma_smallest_singular_value}
Suppose $X=\Sigma^{1/2}Z\in\mathbb{R}^p$ is a random vector, where $Z$ is an isotropic random vector that satisfies wSBA with constants $\mathcal{L},\kappa$, and $\Sigma$ satisfies Equation \ref{eq_paley_zygmund}. If $(X_{i})_{i=1}^{N}$ are i.i.d. copies of $X$, forming rows of a random matrix $\mathbf{X}$. Then there exists constant $c_{0}$ such that the smallest singular value of $\mathbf{X}$ has lower bound
\[
s_{\mathrm{min}}(\mathbf{X})\geqslant \varepsilon\sqrt{\frac{3c_{0}(1-c_{0})-1}{2}}\cdot\sqrt{N}\sqrt{\frac{\mathrm{tr}(\Sigma)}{p}}\gtrsim\sqrt{N\frac{\mathrm{tr}(\Sigma)}{p}},\quad \forall \varepsilon\in (0,1)
\]
with probability at least $1-\mathrm{exp}\left (-\nu \right )-\mathrm{exp}(-cN)$, where $c$ is an absolute constant.
\end{Lemma}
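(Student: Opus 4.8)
The plan is to control $\|\mathbf{X}^{\dagger}\|$ through the smallest nonzero singular value of $\mathbf{X}$. Since $\mathrm{rank}(\Sigma)>N$, the matrix $\mathbf{X}$ has full row rank almost surely and $\|\mathbf{X}^{\dagger}\|=\lambda_{\min}(\mathbf{X}\mathbf{X}^{T})^{-1/2}$, so it suffices to bound $\lambda_{\min}(\mathbf{X}\mathbf{X}^{T})=\inf_{u\in S^{N-1}}\big\|\sum_{i=1}^{N}u_{i}X_{i}\big\|_{\ell_{2}}^{2}$ from below. First I would prune coordinates using Equation \ref{eq_paley_zygmund}: the numbers $\|\Sigma^{1/2}e_{j}\|_{\ell_{2}}^{2}$, $j\in[p]$, have average $\mathrm{tr}(\Sigma)/p$ and, by hypothesis, $L_{(2+\delta_{1})/2}$-norm within $\delta_{2}^{2}$ of it, so a Paley--Zygmund (reverse Markov) argument yields a set $\sigma_{0}\subseteq[p]$ with $|\sigma_{0}|\geqslant c_{0}p$ -- where $c_{0}=c_{0}(\delta_{1},\delta_{2})$ can be taken to satisfy $3c_{0}(1-c_{0})>1$ -- on which $\|\Sigma^{1/2}e_{j}\|_{\ell_{2}}^{2}\gtrsim\mathrm{tr}(\Sigma)/p$. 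Keeping only the columns indexed by $\sigma_{0}$, $\mathbf{X}\mathbf{X}^{T}\succeq\sum_{j\in\sigma_{0}}C_{j}C_{j}^{T}\succeq c_{1}\tfrac{\mathrm{tr}(\Sigma)}{p}\,\mathbf{Z}\Theta\mathbf{Z}^{T}$, where $C_{j}$ is the $j$-th column of $\mathbf{X}$, $\mathbf{Z}$ has the isotropic rows $Z_{i}^{T}$, and $\Theta=\sum_{j\in\sigma_{0}}\widehat e_{j}\widehat e_{j}^{T}$ with $\widehat e_{j}=\Sigma^{1/2}e_{j}/\|\Sigma^{1/2}e_{j}\|_{\ell_{2}}$. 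Here $\mathrm{tr}(\Theta)=|\sigma_{0}|\geqslant c_{0}p$, and a short computation shows $\mathrm{srank}_{4}(\Theta^{1/2})\gtrsim\mathrm{srank}_{4}(\Sigma)$, so that the truncated quantities attached to $\Theta$ are controlled by $R_{k}(\Sigma)$, $R_{k,2}(\Sigma)$ and $\mathfrak{R}_{k}(\Sigma)$. The problem is thereby reduced to a lower bound $\lambda_{\min}(\mathbf{Z}\Theta\mathbf{Z}^{T})\gtrsim N$ for an isotropic-rowed $\mathbf{Z}$ against a $\Theta$ of large effective rank.

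The core step -- and the one I expect to be the main obstacle -- is a \emph{coordinate small-ball estimate in terms of effective rank}. Truncating the spectrum of $\Theta$ at a level $k$, i.e.\ bounding $\Theta\succeq\theta_{k}(\Theta)P_{E_{k}}$ with $E_{k}$ the top-$k$ eigenspace, gives $\mathbf{Z}\Theta\mathbf{Z}^{T}\succeq\theta_{k}(\Theta)\mathbf{Z}_{k}\mathbf{Z}_{k}^{T}$, and a Markov/stable-rank computation bounds $\theta_{k}(\Theta)$ below by a constant multiple of $1-\sqrt{k/\mathrm{srank}_{4}(\Sigma)}$, which together with the tail stable rank $R_{k}(\Sigma)$ is exactly what $R_{k,2}(\Sigma)$ and $\mathfrak{R}_{k}(\Sigma)$ package. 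For the surviving, still isotropic and wSBA, block $\mathbf{Z}_{k}$ it remains to see $\lambda_{\min}(\mathbf{Z}_{k}\mathbf{Z}_{k}^{T})\gtrsim N$, which is where small ball method is genuinely used: for a fixed unit direction $v$ in the retained subspace the scalars $(\langle X_{i},v\rangle)_{i=1}^{N}$ are i.i.d.\ and one-dimensionally anti-concentrated -- Definition \ref{defi_wSBA} with $k=1$ gives $\mathbb{P}\{|\langle X_{i},v\rangle|\leqslant\kappa\|\Sigma^{1/2}v\|_{\ell_{2}}\}\leqslant\mathcal{L}\kappa$ -- so tensorising (a Bernoulli count of the large coordinates, as in the original small-ball argument), with the level rescaled by $\varepsilon\in(0,1)$, produces the per-direction estimate
\[
\mathbb{P}\Big\{\|\mathbf{X}v\|_{\ell_{2}}^{2}\leqslant\varepsilon^{2}\,\tfrac{3c_{0}(1-c_{0})-1}{2}\,N\,v^{T}\Sigma v\Big\}\ \leqslant\ \exp\!\Big(-\tfrac{N}{2}\big(c_{0}\mathfrak{R}_{k^{\ast}}(\Sigma)+1-c_{0}\big)\Big),
\]
and the definition of $k^{\ast}$ in Equation \ref{eq_k_ast} is precisely the demand that the retained spectral mass $\gtrsim N\,\mathrm{tr}(\Sigma)/p$ coexist with a failure probability this small.

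I would then run an $\varepsilon$-net argument. Cover the relevant sphere of directions by a net whose log-cardinality is the term $p\log(1+\cdots)$ appearing in $\nu$, the scale being chosen so that the approximation error in passing from the net to all directions is at most a fixed fraction of the level above; this error, together with the Lipschitz constant of $v\mapsto\tfrac1N\|\mathbf{X}v\|_{\ell_{2}}^{2}$, is controlled by the operator-norm deviation $\|\Gamma\|$, for which Lemma \ref{Lemma_quadratic_process} supplies a bound in terms of $d_{q}(D)$, $\tilde\Lambda(D)$ and $\lambda_{1}(\Sigma)$ (this is where those quantities enter Equation \ref{eq_k_ast}, and it is this step that produces the event of probability $\geqslant1-\exp(-cN)$). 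A union bound over the net against the per-direction estimate then gives, on an event of probability $\geqslant1-\exp(-\nu)-\exp(-cN)$ with $\nu$ as in Equation \ref{eq_nu}, the uniform bound $\|\mathbf{X}v\|_{\ell_{2}}^{2}\gtrsim\varepsilon^{2}\,\tfrac{3c_{0}(1-c_{0})-1}{2}\,N\,v^{T}\Sigma v$; combined with $v^{T}\Sigma v\gtrsim\mathrm{tr}(\Sigma)/p$ on the retained subspace and with the reduction of the first paragraph this yields $s_{\mathrm{min}}(\mathbf{X})^{2}=\lambda_{\min}(\mathbf{X}\mathbf{X}^{T})\geqslant\varepsilon^{2}\,\tfrac{3c_{0}(1-c_{0})-1}{2}\,N\,\tfrac{\mathrm{tr}(\Sigma)}{p}$, and taking square roots is the claim (the weaker consequence $s_{\mathrm{min}}(\mathbf{X})\gtrsim\sqrt{N\,\mathrm{tr}(\Sigma)/p}$ follows since $c_{0},\varepsilon$ are constants).

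The hard part is the calibration embodied in Equation \ref{eq_k_ast}: the small-ball level wants $k^{\ast}$ small (so little spectral mass is discarded and $1-\sqrt{k^{\ast}/\mathrm{srank}_{4}(\Sigma)}$ stays bounded away from $0$), while killing the net requires $N\,(c_{0}\mathfrak{R}_{k^{\ast}}(\Sigma)+1-c_{0})/2+\log p$ to dominate the entropy $p\log(1+\cdots)$; this balance can be struck exactly when the effective rank of $\Sigma$ is large enough (when no admissible $k^{\ast}$ exists, $k^{\ast}=\infty$ and $\nu$ makes the statement vacuous), and it is the anisotropy of $X$ -- unavoidable in the benign-overfitting regime -- that rules out a direct, untruncated coordinate small-ball estimate, the passage through $\sigma_{0}$ and the spectral truncation being the repair.
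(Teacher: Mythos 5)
Your proposal shares the high-level architecture with the paper (Paley--Zygmund pruning, small-ball anti-concentration, an $\eta$-net, and using the operator-norm deviation $\|\Gamma\|$ from Lemma \ref{Lemma_quadratic_process} to pass from the net to the sphere), but the central technical step takes a genuinely different route and, as written, does not produce the quantities you claim it does.

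The paper's Lemma \ref{Lemma_smallest_singular_value} is proved via Theorem \ref{theo_coordinate_small_ball}, a \emph{per-row} statement: for a single draw $X_{j}=\Sigma^{1/2}Z_{j}$ and the fixed ONB $(e_{i})$, the set of ``large'' coordinates has cardinality $\geqslant c_{0}p$ with failure probability $\lesssim\mathfrak{R}_{k}(\Sigma)$. That failure probability is obtained by applying wSBA on \emph{$|\sigma_{j}|$-dimensional} coordinate blocks produced by the restricted-invertibility decomposition of Lemma \ref{Lemma_decompose} (imported from Mendelson--Paouris), with the block size controlled from below through $\mathrm{srank}_{4}(\Sigma^{1/2})$ via Lemma \ref{Lemma_lower_stable_rank}; the effective-rank quantity $\mathfrak{R}_{k}(\Sigma)$ is precisely the exponential decay $(\mathcal{L}\kappa)^{|\sigma_{j}|}$ of the block-wise small ball. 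The subsequent counting (Bernstein over rows, with a fixed direction $t$) then inherits $\mathfrak{R}_{k}(\Sigma)$ in the exponent.

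Your replacement for this -- one-dimensional wSBA ($k=1$) for $\langle X_{i},v\rangle$ plus a Bernoulli/Chernoff count over $i$ -- cannot yield the displayed per-direction estimate. A $k=1$ anti-concentration bound $\mathbb{P}\{|\langle X_{i},v\rangle|\leqslant\kappa\|\Sigma^{1/2}v\|\}\leqslant\mathcal{L}\kappa$ tensorises to a failure probability of the form $\exp(-c(\mathcal{L},\kappa)N)$, with a constant depending only on the one-dimensional small-ball parameters and your chosen level $\varepsilon$; the effective-rank quantity $\mathfrak{R}_{k^{\ast}}(\Sigma)$ simply never enters, because it is a manifestation of the \emph{multi-coordinate} wSBA applied to the restricted-invertibility blocks, a mechanism you never invoke. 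Asserting the failure probability $\exp(-\tfrac{N}{2}(c_{0}\mathfrak{R}_{k^{\ast}}(\Sigma)+1-c_{0}))$ here reverse-engineers the paper's formula without deriving it. Similarly, your ``Markov/stable-rank computation'' claiming $\theta_{k}(\Theta)\gtrsim 1-\sqrt{k/\mathrm{srank}_{4}(\Sigma)}$ is not a consequence of Lemma \ref{Lemma_lower_stable_rank} (that lemma lower bounds $\mathrm{srank}_{4}(\Sigma^{1/2})$ itself, not an eigenvalue of $\Theta$), and the paper's actual tool at this point is Bourgain--Tzafriri-type restricted invertibility, which you never mention. Finally, your reduction is to $\lambda_{\min}(\mathbf{Z}\Theta\mathbf{Z}^{T})$ over $u\in S^{N-1}$, so the net should be $N$-dimensional, yet you take the log-cardinality to be $p\log(1+\cdots)$ as in Equation \ref{eq_k_ast}; this mismatch means your calibration is not the one your own reduction calls for. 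Because the calibration in Equation \ref{eq_k_ast} is exactly the balance between the effective-rank-dependent exponent and the net entropy, these gaps are not cosmetic: without Theorem \ref{theo_coordinate_small_ball} (and hence Lemma \ref{Lemma_decompose} and Lemma \ref{Lemma_lower_stable_rank}), the specific exponent, threshold and entropy you write down are unsupported.
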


With the help of Lemma \ref{Lemma_smallest_singular_value}, we can arrive at the estimation error:
\begin{thm}[Estimation Error]\label{theo_estimation_error}
Suppose $X=\Sigma^{1/2}Z$, where $Z$ is a random vector satisfying wSBA with parameters $\mathcal{L},\kappa$ and $\Sigma$ satisfies Equation \ref{eq_paley_zygmund}. Let $(X_{i})_{i=1}^{N}$ are i.i.d. copies of $X$. Let $Y_{i}=\left<X_{i},\alpha^{\ast}\right>+\xi_{i}$, where $(\xi_{i})_{i=1}^{N}$ are i.i.d. sub-gaussian random variables, and let $\mathbf{Y}=(Y_{i})_{i=1}^{N}$. Let $\mathbf{X}$ as random matrix with lines $X_{i}^{T}$, and $\hat{\alpha}=\mathbf{X}^{\dagger}\mathbf{Y}$. For $\nu$ defined in Equation \ref{eq_nu}, there exists constant $c_{0}$ such that: with probability at least $1-\mathrm{exp}(-cN)-\mathrm{exp}(-\nu)$,
\begin{eqnarray}
\left\|\hat{\alpha}-\alpha^{\ast}\right\|_{\ell_{2}}&\leqslant& \left\|\alpha^{\ast}\right\|_{\ell_{2}}+\sqrt{\frac{2}{3(1-c_{0})c_{0}-1}}\sqrt{\frac{p}{\mathrm{tr}(\Sigma)}}\frac{\left\|\xi\right\|_{\psi_{2}}}{\varepsilon},\quad \forall \varepsilon \in (0,1)\nonumber\\
&\leqslant&\left\|\alpha^{\ast}\right\|_{\ell_{2}}+c\left\|\xi\right\|_{\psi_{2}}\sqrt{\frac{p}{\mathrm{tr}(\Sigma)}}.\nonumber
\end{eqnarray}
\end{thm}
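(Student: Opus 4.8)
The plan is to treat Theorem \ref{theo_estimation_error} as an essentially immediate consequence of Lemma \ref{Lemma_smallest_singular_value}, combined with the elementary noise bound already recorded above. First I would start from the deterministic estimate in \ref{eq_localized_estimation_error}: since $\hat\alpha = \mathbf{X}^\dagger\mathbf{X}\alpha^\ast + \mathbf{X}^\dagger\xi$ and $I - \mathbf{X}^\dagger\mathbf{X}$ is the orthogonal projection onto $(\mathrm{row}\,\mathbf{X})^\perp$, so that its operator norm is at most $1$, the triangle inequality gives
\[
\|\hat\alpha - \alpha^\ast\|_{\ell_2} \le \|\alpha^\ast\|_{\ell_2} + \|\mathbf{X}^\dagger\|\,\|\xi\|_{\ell_2}.
\]
Everything then reduces to controlling $\|\mathbf{X}^\dagger\|$ and $\|\xi\|_{\ell_2}$ on a high-probability event.

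For the noise term, $\xi_i^2 - \mathbb{E}\xi_i^2$ is sub-exponential, so Bernstein's inequality yields $\frac1N\sum_{i=1}^N\xi_i^2 \le \|\xi\|_{\psi_2}^2$ with probability at least $1 - \exp(-cN)$ (exactly as used in the preliminaries), hence $\|\xi\|_{\ell_2}\le\sqrt N\|\xi\|_{\psi_2}$ on that event. For $\|\mathbf{X}^\dagger\|$, recall $\mathrm{rank}(\Sigma)>N$, so $\mathbf{X}$ has full row rank almost surely and $\|\mathbf{X}^\dagger\| = s_{\min}(\mathbf{X})^{-1}$; Lemma \ref{Lemma_smallest_singular_value} then supplies, with probability at least $1 - \exp(-\nu) - \exp(-cN)$,
\[
s_{\min}(\mathbf{X}) \ge \varepsilon\sqrt{\tfrac{3c_0(1-c_0)-1}{2}}\,\sqrt N\,\sqrt{\tfrac{\mathrm{tr}(\Sigma)}{p}}.
\]
Intersecting the two events by a union bound over the exceptional sets, whose total probability is at most $\exp(-\nu)+\exp(-cN)$ after relabelling the absolute constant $c$, and dividing, the product $\|\mathbf{X}^\dagger\|\,\|\xi\|_{\ell_2}$ is at most $\sqrt{2/(3(1-c_0)c_0-1)}\,\sqrt{p/\mathrm{tr}(\Sigma)}\,\|\xi\|_{\psi_2}/\varepsilon$, which is the first displayed bound. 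The second, with a single absolute constant $c$, follows by fixing $\varepsilon$ to an absolute value (e.g. $\varepsilon=1/2$) and absorbing $\sqrt{2/(3(1-c_0)c_0-1)}/\varepsilon$ into $c$.

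Within this argument there is no real obstacle; the theorem is a one-line deduction. The genuine difficulty is displaced into Lemma \ref{Lemma_smallest_singular_value}, whose proof I would not attempt here: it is where one must replace the usual concentration-based lower bound on $s_{\min}(\mathbf{X})$ by a small-ball argument, using the wSBA of Definition \ref{defi_wSBA} together with the Paley--Zygmund-type condition \ref{eq_paley_zygmund} to pass to a proportional coordinate subset $\sigma_0\subset[p]$ carrying a uniform lower bound on $\|\Sigma^{1/2}e_i\|_{\ell_2}$, and then to balance an $\varepsilon$-net cardinality against the exponential small-ball probability. That calibration is precisely what produces the $\exp(-\nu)$ term and the effective-rank quantity $\mathfrak R_{k^\ast}(\Sigma)$ appearing in $\nu$; in the present proof I simply invoke its conclusion.
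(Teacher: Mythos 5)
Your argument matches the paper's proof exactly: the paper also deduces Theorem~\ref{theo_estimation_error} directly from the triangle-inequality decomposition in Equation~\ref{eq_localized_estimation_error}, the Bernstein bound $\|\xi\|_{\ell_2}\le\sqrt N\|\xi\|_{\psi_2}$, and Lemma~\ref{Lemma_smallest_singular_value} via $\|\mathbf{X}^\dagger\|=s_{\min}(\mathbf{X})^{-1}$, calling the resulting deduction ``trivial.'' Your observations that $I-\mathbf{X}^\dagger\mathbf{X}$ is an orthogonal projection (hence norm $\le 1$), that $\mathrm{rank}(\Sigma)>N$ guarantees full row rank, and that the second displayed bound follows by fixing $\varepsilon$ and absorbing constants are all correct and consistent with the paper.
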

The proof is trivial by using Equation \ref{eq_localized_estimation_error} and Lemma \ref{Lemma_smallest_singular_value}.\par
Theorem \ref{theo_estimation_error} can be compared with Theorem 3 in \cite{chinot2020benign}. Their estimation error is related to effective rank $r_{k}(\Sigma)$, while our bound depends only on $\mathrm{tr}(\Sigma)/p$. This is because our lower bound on the smallest singular value is given by average eigenvalue, instead of effective rank. We believe that by choosing $c_{0}$, the smallest singular value can be controlled in terms of effective rank, though we think deriving such a bound in our work is not necessary.\par

In the following subsection, we are going to prove Lemma \ref{Lemma_smallest_singular_value}. An outline of the proof of Lemma \ref{Lemma_smallest_singular_value} is as follows. Firstly, we establish a coordinate small-ball estimation in terms of effective rank in Theorem \ref{theo_coordinate_small_ball}. Secondly, we prove a uniform lower bound of $\left\|\mathbf{X}t\right\|_{\ell_{2}}$ on an epsilon-Net of $S^{p-1}$. Finally, we can lower bound the smallest singular value by combining its minimal $\ell_{2}$ norm and its maximal operator norm.

\subsection{Coordinate small ball estimates in terms of effective rank}

In this subsection, we prove the following Theorem.
\begin{thm}[Coordinate Small Ball Estimate in terms of Effective ranks]\label{theo_coordinate_small_ball}
If random vector $X=\Sigma^{1/2}Z\in\mathbb{R}^p$ satisfies wSBA with constants $(\mathcal{L},\kappa)$, and $\left (e_{i}\right)_{i=1}^{p}$ are ONB of $\mathbb{R}^p$ that satisfy Equation \ref{eq_paley_zygmund}, then for $\varepsilon\in (0,1)$,
\begin{eqnarray}
\mathbb{P}\left\{\left | \left \{ i\leqslant p:\, \left |\left <\Sigma^{1/2}Z,e_{i} \right > \right |\geqslant \varepsilon\sqrt{\frac{\mathrm{tr}(\Sigma)}{p}} \right \} \right |\leqslant c_{0}p \right \}\lesssim \frac{(4p-k)^2}{8p}\frac{c^{\frac{16p^2}{(4p-k)^2}R_{k,2}(\Sigma)}}{R_{k,2}(\Sigma)},
\end{eqnarray}
where $c$ depends on $\mathcal{L},\kappa$.
\end{thm}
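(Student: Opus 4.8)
The plan is to lower-bound, with high probability, the number of coordinates $i\le p$ on which $|\langle \Sigma^{1/2}Z,e_i\rangle|\ge \varepsilon\sqrt{\mathrm{tr}(\Sigma)/p}$ by first passing to a well-chosen subset $\sigma_0\subset[p]$ of size $\ge c_0 p$ and then running a small-ball / Paley--Zygmund argument coordinate-by-coordinate on $\sigma_0$. First I would use Assumption \eqref{eq_paley_zygmund}: the $(2+\delta_1)$-moment control on the vector $(\|\Sigma^{1/2}e_i\|_{\ell_2})_i$ lets one extract, via a Markov/reverse-Hölder argument, a set $\sigma_0\subset[p]$ with $|\sigma_0|\ge c_0 p$ on which $\|\Sigma^{1/2}e_i\|_{\ell_2}^2\gtrsim \mathrm{tr}(\Sigma)/p$ uniformly; the constant $c_0$ then depends only on $\delta_1,\delta_2$. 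On this subset each coordinate functional $z\mapsto\langle\Sigma^{1/2}z,e_i\rangle$ has $L_2$-norm bounded below by a fixed multiple of $\sqrt{\mathrm{tr}(\Sigma)/p}$, so the event $\{|\langle\Sigma^{1/2}Z,e_i\rangle|\ge \varepsilon\sqrt{\mathrm{tr}(\Sigma)/p}\}$ is, for $\varepsilon\in(0,1)$ fixed, a genuine "large coordinate" event.

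Next I would convert the wSBA (Definition \ref{defi_wSBA}) into a quantitative statement about how many of these large-coordinate events can fail simultaneously. The key observation is that if fewer than $c_0 p$ of the coordinates in $\sigma_0$ are large, then $\Sigma^{1/2}Z$ lies, up to a small $\ell_2$-perturbation, in a coordinate subspace $F$ of dimension $k=|\sigma_0|-c_0 p$ spanned by a subset of $\{e_i\}$; more precisely, the projection of $\Sigma^{1/2}Z$ onto the orthogonal complement of the "small" coordinates has $\ell_2$-norm at most of order $\sqrt{k}\,\varepsilon\sqrt{\mathrm{tr}(\Sigma)/p}$. Applying wSBA to this $k$-dimensional subspace, with the appropriate scale $\kappa'\sim \varepsilon\sqrt{\mathrm{tr}(\Sigma)/p}$ (rescaled by the lower bound on the covariance in the chosen directions), gives a probability bound of the form $(\mathcal{L}\kappa')^{k}$. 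A union bound over the $\binom{|\sigma_0|}{k}\le \binom{p}{k}$ choices of which coordinates stay small then produces a bound like $\binom{p}{k}(\mathcal{L}\kappa')^k$, and one optimizes over the admissible range of $k$.

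The delicate part — and the main obstacle — is matching this combinatorial/small-ball estimate to the precise right-hand side $\mathfrak{R}_k(\Sigma)$, i.e. getting the exponent $\tfrac{16p^2}{(4p-k)^2}R_{k,2}(\Sigma)$ and the prefactor $\tfrac{(4p-k)^2}{8p}\cdot R_{k,2}(\Sigma)^{-1}$ to come out correctly. This is where the truncated stable rank enters: the quantity $R_{k,2}(\Sigma)=\bigl(1-\sqrt{k/\mathrm{srank}_4(\Sigma)}\bigr)R_k(\Sigma)$ should arise as a sharpened count of the "effective number of comparably-sized large coordinates," replacing the naive $k$ by an $\ell_4$-vs-$\ell_2$ controlled quantity, and the factor $16p^2/(4p-k)^2$ tracks the rescaling of $\kappa'$ by $\sqrt{\mathrm{tr}(\Sigma)/p}$ relative to $\lambda_{k+1}$ through the effective-rank identity. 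Concretely, I would (i) bound $\binom{p}{k}\le (ep/k)^k$, (ii) absorb the $(ep/k)^k$ into the $(\mathcal{L}\kappa')^k$ by noting $\kappa'$ carries a factor $\varepsilon\sqrt{\mathrm{tr}(\Sigma)/(p\lambda_{k+1})}\sim\varepsilon\sqrt{r_k(\Sigma)/p}$, and (iii) use $r_k(\Sigma)\asymp R_{k,2}(\Sigma)$-type comparisons (as referenced to Appendix A.6 of \cite{bartlett2019benign}) to rewrite $k$ and $r_k$ in terms of $R_{k,2}(\Sigma)$, arriving at $c^{cR_{k,2}(\Sigma)}$ with the stated correction factor $16p^2/(4p-k)^2$ when $k$ is not negligible compared to $p$. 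Keeping all constants packed into a single $c=c(\mathcal{L},\kappa)<1$ and verifying that the admissible range $1\le k\le p-1$ of Definition \ref{defi_wSBA} is respected throughout is the part that needs the most care.
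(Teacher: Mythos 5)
Your first step (extracting $\sigma_0\subset[p]$ with $|\sigma_0|\ge c_0 p$ and $\|\Sigma^{1/2}e_i\|_{\ell_2}^2\gtrsim\mathrm{tr}(\Sigma)/p$ via Paley--Zygmund) matches the paper. After that, your route diverges from the paper's, and the divergence is where a genuine gap appears.

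The paper does not union-bound over the $\binom{p}{k}$ choices of "small" coordinates. Instead it invokes the restricted invertibility machinery of Mendelson--Paouris (Lemma \ref{Lemma_decompose}): $\sigma_0$ is split into $\ell$ disjoint coordinate blocks $\sigma_j$, each of size at least $c_0^4\,\mathrm{srank}_4(\Sigma^{1/2})/2048$, on which $\bigl((\Sigma^{1/2})^*P_{\sigma_j}^*\bigr)^{-1}$ has operator norm at most $4$. This well-conditioning is exactly what lets one transfer the wSBA on the isotropic $Z$ to the coordinates of $X=\Sigma^{1/2}Z$ restricted to $\sigma_j$; without it, $P_F\Sigma^{1/2}Z$ for an arbitrary coordinate subspace $F$ could be nearly degenerate no matter how "spread" $Z$ is, and the small-ball estimate you want simply isn't available. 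Lemma \ref{Lemma_semi_finished} then sums $(e\mathcal{L}\varepsilon/(c_0/2))^{|\sigma_j|/(c_0/2)}$ over $j\le\ell$ --- a block-wise bound, not a subset-wise union bound --- so the combinatorial cost is $\ell$ (which is at most of order $p/\mathrm{srank}_4$), not $\binom{p}{k}$. Your proposed $\binom{p}{k}\cdot(\mathcal{L}\kappa')^k$ bound is both structurally different and, as you note yourself, hard to close: absorbing $(ep/k)^k$ requires $\kappa'$ to be exponentially small in $\log(p/k)$ per coordinate, which the wSBA does not provide at the natural scale $\varepsilon\sqrt{\mathrm{tr}(\Sigma)/p}$.

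The second missing ingredient is the paper's Lemma \ref{Lemma_lower_stable_rank}, which is the actual new content of the theorem: a deterministic inequality
\[
\mathrm{srank}_4(\Sigma^{1/2})\;\geqslant\;\frac{16p^2}{(4p-k)^2}\Bigl(1-\sqrt{k/\mathrm{srank}_4(\Sigma)}\Bigr)R_k(\Sigma)
\]
proved via Ky Fan's maximal principle (to control $\sum_{i>k}s_i^2(\Sigma^{1/2})$) and Hölder (to control $\sum_{i\le k}s_i^2(\Sigma)$). Plugging this lower bound on $\mathrm{srank}_4(\Sigma^{1/2})$ into the block sizes $|\sigma_j|$ from Lemma \ref{Lemma_decompose} is precisely what produces the exponent $\tfrac{16p^2}{(4p-k)^2}R_{k,2}(\Sigma)$ and prefactor in the theorem. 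Your step (iii), which invokes "$r_k(\Sigma)\asymp R_{k,2}(\Sigma)$-type comparisons" from Appendix A.6 of \cite{bartlett2019benign}, does not substitute for this: the definition of $R_{k,2}(\Sigma)$ already mixes in $\mathrm{srank}_4(\Sigma)$, and the needed inequality is a lower bound on the stable rank of $\Sigma^{1/2}$ (not $\Sigma$) in terms of $R_k$, which is not among the comparisons in that reference. Without both the restricted-invertibility blocking and the stable-rank lower bound, the claimed right-hand side cannot be reached.
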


This is just a simple modification of that in \cite{mendelson2019stable}. We divide the proof of Theorem \ref{theo_coordinate_small_ball} into three steps.\par
Firstly, we select a proper subset $\sigma_{0}\subset [p]$. This step can be done by using a probabilistic combinatorics technique. Let $u_{i}$ be a random vector uniformly distributed on the given ONB $\left\{e_{i}\right\}_{i=1}^{p}$. Set indicators $(1_{i})_{i=1}^{p}$. If $\left\|\Sigma^{1/2}u_{i}\right\|_{\ell_{2}}\geqslant\sqrt{\mathrm{tr}(\Sigma)/(2p)}$, then $1_{i}=1$, otherwise $1_{i}=0$. Then
\[
\mathbb{E}\left [\sum_{i=1}^{p}1_{i} \right]=\sum_{i=1}^{p}\mathbb{P}\left\{\left\|\Sigma^{1/2}u_{i}\right\|_{\ell_{2}}\geqslant\sqrt{\frac{\mathrm{tr}(\Sigma)}{2p}}\right\}.
\]
Then by Equation \ref{eq_paley_zygmund} and Paley-Zygmund inequality, see e.g. Lemma 3.1 in \cite{kallenberg2002foundations}, we can get its lower bound: $\mathrm{RHS}\geqslant c_{0}(\delta_{1},\delta_{2})p$. Therefore, there exists a subset $\sigma_{0}\subset [p]$, whose cardinality is at least $c_{0}p$, such that for all $i\in\sigma_{0}$, there exists
\begin{eqnarray}\label{eq_lower_ell2_row}
\left\|\Sigma^{1/2}e_{i}\right\|_{\ell_{2}}\geqslant\sqrt{\frac{\mathrm{tr}(\Sigma)}{2p}}.
\end{eqnarray}
Secondly, \cite{mendelson2019stable} decompose $[c_{0}p]$ into $\ell$ coordinate blocks by using restricted invertibility Theorem. That is to say,
\begin{Lemma}[Lemma 3.1 in \cite{mendelson2019stable}]\label{Lemma_decompose}
Assume that for every $1\leqslant i\leqslant p$, Equation \ref{eq_paley_zygmund} holds. Set $k_{4}=\mathrm{srank}_{4}(\Sigma^{1/2})$. Then for any $\lambda\in (0,1)$, there are disjoint subsets $(\sigma_{i})_{i=1}^{\ell}\subset [c_{0}p]$ such that
\begin{itemize}
\item For $1\leqslant j\leqslant \ell$, there is $\left|\sigma_{j}\right|\geqslant c_{0}^4 k_{q}/2048$ and $\sum_{j=1}^{\ell}\left|\sigma_{j}\right|\geqslant c_{0}p/2$.
\item $\left\|\left((\Sigma^{1/2})^{\ast}P_{\sigma_{j}}^{\ast}\right)^{-1}\right\|_{S_{\infty}}\leqslant 4$.
\end{itemize}
\end{Lemma}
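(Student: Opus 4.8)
The plan is to reproduce the restricted–invertibility argument of \cite{mendelson2019stable}, taking as input the set $\sigma_0$ already produced in the first step above: $\sigma_0\subset[p]$ with $|\sigma_0|\geqslant c_0 p$ and, by Equation \ref{eq_lower_ell2_row}, $\|\Sigma^{1/2}e_i\|_{\ell_2}^2\geqslant\mathrm{tr}(\Sigma)/(2p)$ for every $i\in\sigma_0$. First I would unwind the conclusion: in the scaling used throughout this paper (everything measured against $\sqrt{\mathrm{tr}(\Sigma)/p}$, cf.\ Equation \ref{eq_lower_ell2_row} and Theorem \ref{theo_coordinate_small_ball}), the bound $\|((\Sigma^{1/2})^{\ast}P_{\sigma_j}^{\ast})^{-1}\|_{S_\infty}\leqslant 4$ is the assertion that $s_{\mathrm{min}}(\Sigma^{1/2}P_{\sigma_j}^{\ast})\geqslant\frac14\sqrt{\mathrm{tr}(\Sigma)/p}$, i.e.\ that the columns $(\Sigma^{1/2}e_i)_{i\in\sigma_j}$ are uniformly far from being linearly dependent. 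The free parameter $\lambda\in(0,1)$ in the lemma is the tolerance in the restricted–invertibility theorem invoked below, fixed so as to make the constants $4$ and $2048$ explicit.

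The engine is an $S_4$-version of the Bourgain--Tzafriri/Spielman--Srivastava restricted invertibility theorem, applied to column submatrices of $\Sigma^{1/2}$: for a linear map $B$ with $n$ columns and any $\eta\in(0,1)$ there is a coordinate set $\sigma$ with $|\sigma|\geqslant c\,\eta^4\,\mathrm{srank}_4(B)$ on which $s_{\mathrm{min}}(BP_\sigma^{\ast})\geqslant(1-\eta)\,\|B\|_{S_2}/\sqrt{n}$. I would use the $S_4$ form rather than the classical one --- which yields only $|\sigma|\gtrsim\|B\|_{S_2}^2/\|B\|^2$ --- because the block sizes must be controlled by a truncated stable rank, as in Theorem \ref{theo_coordinate_small_ball}; the operator norm would reintroduce $\lambda_1(\Sigma)$ and collapse the resulting coordinate small-ball estimate.

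Granting this, the decomposition is built greedily. Set $\tau_1=\sigma_0$; having chosen disjoint $\sigma_1,\dots,\sigma_j\subseteq\sigma_0$ with $\sum_{i\leqslant j}|\sigma_i|<c_0 p/2$, put $\tau_{j+1}=\sigma_0\setminus\bigcup_{i\leqslant j}\sigma_i$, so $|\tau_{j+1}|>c_0 p/2$, and let $B_{j+1}$ be the submatrix of $\Sigma^{1/2}$ with columns indexed by $\tau_{j+1}$. Two budget estimates then hold uniformly in $j$. First, $\|B_{j+1}\|_{S_2}^2=\sum_{i\in\tau_{j+1}}\|\Sigma^{1/2}e_i\|_{\ell_2}^2\geqslant|\tau_{j+1}|\,\mathrm{tr}(\Sigma)/(2p)$, so $\|B_{j+1}\|_{S_2}/\sqrt{|\tau_{j+1}|}\geqslant\sqrt{\mathrm{tr}(\Sigma)/(2p)}$. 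Second, $\|B_{j+1}\|_{S_4}^4=\mathrm{tr}\big((B_{j+1}B_{j+1}^{\ast})^2\big)=\sum_{i,i'\in\tau_{j+1}}\Sigma_{ii'}^2\leqslant\mathrm{tr}(\Sigma^2)$, and combining with $\|B_{j+1}\|_{S_2}^2\geqslant(c_0 p/2)\cdot\mathrm{tr}(\Sigma)/(2p)=c_0\,\mathrm{tr}(\Sigma)/4$ gives $\mathrm{srank}_4(B_{j+1})=\|B_{j+1}\|_{S_2}^4/\|B_{j+1}\|_{S_4}^4\geqslant(c_0/4)^2(\mathrm{tr}(\Sigma))^2/\mathrm{tr}(\Sigma^2)=(c_0^2/16)\,\mathrm{srank}_4(\Sigma^{1/2})=(c_0^2/16)k_4$. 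Applying the restricted invertibility statement to $B_{j+1}$ with $\eta$ fixed so small that $(1-\eta)/\sqrt{2}\geqslant 1/4$ produces $\sigma_{j+1}\subseteq\tau_{j+1}$ with $s_{\mathrm{min}}(\Sigma^{1/2}P_{\sigma_{j+1}}^{\ast})\geqslant(1-\eta)\sqrt{\mathrm{tr}(\Sigma)/(2p)}\geqslant\frac14\sqrt{\mathrm{tr}(\Sigma)/p}$ and $|\sigma_{j+1}|\geqslant c\,\eta^4(c_0^2/16)k_4$, which is of order $c_0^2 k_4$ and in particular is $\geqslant c_0^4 k_4/2048$ for a suitable tuning of the constants (using $c_0\leqslant1$, so $c_0^2\geqslant c_0^4$). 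Stopping at the first $\ell$ with $\sum_{i\leqslant\ell}|\sigma_i|\geqslant c_0 p/2$ --- which happens after finitely many steps since the blocks are nonempty and disjoint inside $\sigma_0$ --- and noting that $|\tau|>c_0 p/2$ at every step before stopping, so the hypotheses are met throughout, the blocks $(\sigma_i)_{i=1}^\ell$ are disjoint subsets of $\sigma_0$ (identified with $[c_0 p]$ in the statement), each of cardinality $\geqslant c_0^4 k_4/2048$, with $\sum_i|\sigma_i|\geqslant c_0 p/2$, and each satisfying the conditioning bound; that is exactly the two bullets.

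The main obstacle is the restricted invertibility input: one genuinely needs the version in which the extracted block has size proportional to $\mathrm{srank}_4(B)$ rather than the classical $\|B\|_{S_2}^2/\|B\|^2$. Establishing it means running the Spielman--Srivastava barrier/potential argument while tracking that the relevant obstacle potentials are governed by $\|B\|_{S_4}$ in place of $\|B\|$ (or, alternatively, invoking such an $S_4$-restricted invertibility theorem from the literature). The remainder --- the trace budgeting that keeps every sub-block's stable rank comparable to $k_4$, and the elementary lower bound $\|B_{j+1}\|_{S_2}/\sqrt{|\tau_{j+1}|}\geqslant\sqrt{\mathrm{tr}(\Sigma)/(2p)}$ that keeps $s_{\mathrm{min}}$ at the required level regardless of how small $c_0$ is --- is bookkeeping that rests only on the comparable-norm property of $\sigma_0$ secured in the first step.
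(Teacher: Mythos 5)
The paper offers no proof of this lemma: it is imported verbatim (with a slightly garbled normalization) as Lemma~3.1 of \cite{mendelson2019stable}, so there is nothing in the text to compare your argument against except the source itself. Your sketch is a faithful reconstruction of the Mendelson--Paouris proof. The greedy peeling of $\sigma_0$ is right, and the two ``budget'' estimates you run uniformly in $j$ are exactly the ones that make it work: $\|B_{\tau}\|_{S_2}/\sqrt{|\tau|}\geqslant\sqrt{\mathrm{tr}(\Sigma)/(2p)}$ follows from the column-norm lower bound on $\sigma_0$, and $\|B_{\tau}\|_{S_4}^4\leqslant\mathrm{tr}(\Sigma^2)$ (via $B_\tau B_\tau^{\ast}\preceq\Sigma$) keeps $\mathrm{srank}_4(B_\tau)\gtrsim c_0^2 k_4$ so long as $|\tau|>c_0p/2$, which holds at every step before termination. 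The single piece of real machinery --- a restricted invertibility theorem in which the extracted block has cardinality proportional to $\mathrm{srank}_4(B)$ rather than to $\|B\|_{S_2}^2/\|B\|^2$ --- is not something you need to establish by redoing a barrier argument; it is available in \cite{naor2017restricted}, which is precisely what \cite{mendelson2019stable} cite, so that is a reference rather than a gap. One cosmetic point worth recording: the conclusion $\left\|\left((\Sigma^{1/2})^{\ast}P_{\sigma_{j}}^{\ast}\right)^{-1}\right\|_{S_{\infty}}\leqslant 4$ as transcribed here has dropped the normalization $\sqrt{p}/\left\|\Sigma^{1/2}\right\|_{S_2}$ that appears in the original; your reading $s_{\min}\bigl(\Sigma^{1/2}P_{\sigma_j}^{\ast}\bigr)\geqslant\tfrac14\sqrt{\mathrm{tr}(\Sigma)/p}$ is what the source actually proves and what the downstream argument (Lemma \ref{Lemma_semi_finished} and Theorem \ref{theo_coordinate_small_ball}) requires.
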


Next, we derive a uniform lower bound of $\left|\sigma_{j}\right|$ by lower bounding $\mathrm{srank}_{4}(\Sigma^{1/2})$.
\begin{Lemma}[Lower bound of stable rank in terms of effective rank]\label{Lemma_lower_stable_rank}
For $0\leqslant k\leqslant p-1$ and $\Sigma\in\mathbb{R}^{p\times p}$, we have
\[
\mathrm{srank}_{4}(\Sigma^{1/2})\geqslant \frac{16p^2}{(4p-k)^2}\left (1-\sqrt{\frac{k}{\mathrm{srank}_{4}(\Sigma)}} \right )R_{k}(\Sigma),
\]
\end{Lemma}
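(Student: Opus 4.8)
The plan is to reduce the statement to an elementary inequality among the eigenvalues $\lambda_{1}\geqslant\cdots\geqslant\lambda_{p}\geqslant 0$ of $\Sigma$. Write $T=\mathrm{tr}(\Sigma)=\sum_{i=1}^{p}\lambda_{i}$, $S=\sum_{i=1}^{p}\lambda_{i}^{2}$, $U=\sum_{i=1}^{p}\lambda_{i}^{4}$, and $T_{>k}=\sum_{i>k}\lambda_{i}$, $S_{>k}=\sum_{i>k}\lambda_{i}^{2}$, so that $\mathrm{srank}_{4}(\Sigma^{1/2})=T^{2}/S$, $\mathrm{srank}_{4}(\Sigma)=S^{2}/U$ and $R_{k}(\Sigma)=T_{>k}^{2}/S_{>k}$. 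If $k\geqslant\mathrm{srank}_{4}(\Sigma)$ the right-hand side of the asserted inequality is non-positive while the left-hand side is positive, so nothing is to prove; hence one may assume $\sqrt{k/\mathrm{srank}_{4}(\Sigma)}<1$ (and, as usual, $\lambda_{k+1}>0$ so that $R_{k}(\Sigma)$ is well defined).

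The argument rests on two short estimates. First, Cauchy--Schwarz applied to the $k$ largest of the numbers $\lambda_{i}^{2}$ gives $\sum_{i\leqslant k}\lambda_{i}^{2}\leqslant\sqrt{k}\,\big(\sum_{i\leqslant k}\lambda_{i}^{4}\big)^{1/2}\leqslant\sqrt{kU}$, hence
\[
S_{>k}=S-\sum_{i\leqslant k}\lambda_{i}^{2}\geqslant S-\sqrt{kU}=S\Big(1-\sqrt{kU/S^{2}}\,\Big)=S\Big(1-\sqrt{k/\mathrm{srank}_{4}(\Sigma)}\,\Big);
\]
this is exactly where the correction factor $1-\sqrt{k/\mathrm{srank}_{4}(\Sigma)}$ comes from. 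Second, because $\lambda_{1},\dots,\lambda_{k}$ are the $k$ largest eigenvalues, the average of the top $k$ dominates the average of all $p$: indeed $\sum_{i\leqslant k}\lambda_{i}\geqslant k\lambda_{k}$ and $\sum_{i>k}\lambda_{i}\leqslant(p-k)\lambda_{k}$, so $(p-k)\sum_{i\leqslant k}\lambda_{i}\geqslant k\sum_{i>k}\lambda_{i}$, giving $p\sum_{i\leqslant k}\lambda_{i}\geqslant kT$ and therefore $T_{>k}=T-\sum_{i\leqslant k}\lambda_{i}\leqslant\frac{p-k}{p}\,T$.

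Chaining the two bounds (and dividing the first by $S>0$ to get $1-\sqrt{k/\mathrm{srank}_{4}(\Sigma)}\leqslant S_{>k}/S$),
\[
\Big(1-\sqrt{k/\mathrm{srank}_{4}(\Sigma)}\,\Big)R_{k}(\Sigma)=\Big(1-\sqrt{k/\mathrm{srank}_{4}(\Sigma)}\,\Big)\frac{T_{>k}^{2}}{S_{>k}}\leqslant\frac{T_{>k}^{2}}{S}\leqslant\frac{(p-k)^{2}}{p^{2}}\cdot\frac{T^{2}}{S}=\frac{(p-k)^{2}}{p^{2}}\,\mathrm{srank}_{4}(\Sigma^{1/2}),
\]
so $\mathrm{srank}_{4}(\Sigma^{1/2})\geqslant\frac{p^{2}}{(p-k)^{2}}\big(1-\sqrt{k/\mathrm{srank}_{4}(\Sigma)}\,\big)R_{k}(\Sigma)$, and this already implies the claim since $\frac{p^{2}}{(p-k)^{2}}\geqslant\frac{16p^{2}}{(4p-k)^{2}}$, i.e. $4p-k\geqslant 4(p-k)$, holds for every $k\geqslant 0$. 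I do not expect a genuine obstacle here: the only points needing care are bookkeeping ones — keeping $\mathrm{srank}_{4}(\Sigma^{1/2})=(\mathrm{tr}\,\Sigma)^{2}/\mathrm{tr}(\Sigma^{2})$ and $\mathrm{srank}_{4}(\Sigma)=(\mathrm{tr}\,\Sigma^{2})^{2}/\mathrm{tr}(\Sigma^{4})$ strictly apart (conflating them makes the statement false), and applying Cauchy--Schwarz to the squared eigenvalues so that it controls the \emph{denominator} of $R_{k}(\Sigma)$ rather than its numerator. The computation incidentally shows that the constant $\frac{16p^{2}}{(4p-k)^{2}}$ is not sharp — the stronger $\frac{p^{2}}{(p-k)^{2}}$ works — but the weaker form is harmless downstream.
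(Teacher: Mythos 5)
Your proof is correct, and the second half of it (Cauchy--Schwarz applied to $\lambda_{1}^{2},\dots,\lambda_{k}^{2}$ to get $S_{>k}\geqslant S\bigl(1-\sqrt{k/\mathrm{srank}_{4}(\Sigma)}\bigr)$) is the same computation the paper does via H\"older on $\|\Sigma\|_{S_{2}}^{2}$ and $\|\Sigma\|_{S_{4}}^{2}$. Where you genuinely diverge is the first half, the control of $T_{>k}$. The paper invokes Ky Fan's maximal principle together with the row-norm lower bound of Equation~\ref{eq_lower_ell2_row} (itself a consequence of the Paley--Zygmund hypothesis~\ref{eq_paley_zygmund} and valid only for indices in the subset $\sigma_{0}$ of size $\geqslant c_{0}p$) to deduce $\sum_{i>k}\lambda_{i}(\Sigma)\leqslant\bigl(1-\tfrac{k}{4p}\bigr)\mathrm{tr}(\Sigma)$; that route implicitly needs $k\lesssim c_{0}p$ and quietly imports an assumption that the Lemma's statement (``for $0\leqslant k\leqslant p-1$ and $\Sigma\in\mathbb{R}^{p\times p}$'') does not advertise. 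You replace this with the purely order-theoretic observation that the bottom $p-k$ eigenvalues average at most the overall average, giving $T_{>k}\leqslant\tfrac{p-k}{p}T$ unconditionally. This is both more elementary (no Ky Fan, no appeal to $\sigma_{0}$ or wSBA) and strictly sharper, yielding the constant $p^{2}/(p-k)^{2}$ in place of the paper's $16p^{2}/(4p-k)^{2}$ --- and, since $4p-k\geqslant 4(p-k)$, your bound implies the stated one for every $k\geqslant 0$. In short: same skeleton for the $\sqrt{k/\mathrm{srank}_{4}(\Sigma)}$ factor, but a cleaner and stronger argument for the $T_{>k}$ step that makes the Lemma hold exactly as stated without auxiliary hypotheses.
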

\begin{proof}
The proof is separated into two parts. Firstly, we lower bound $\left\|\Sigma^{1/2}\right\|_{S_{2}}$.\par
By Ky Fan's maximal principle, see e.g. Lemma 8.1.8 in \cite{stormer2013positive} or Chapter 3 in \cite{bhatia1997matrix}, we have
\[
\sum_{i=1}^{p-r}s_{i}^2 (\Sigma^{1/2})\geqslant \mathrm{tr}\left (\Sigma\left (I_{p}-P \right ) \right ),
\]
where $I_{p}-P$ is an orthogonal projection of rank $(p-r)$, which provides a lower bound of sum of largest $(p-r)$ eigenvalues of $\Sigma$. Set $p-r=k$, then $\mathrm{rank}(P)=p-k$. We have $\sum_{i>k}s_{i}^2 (\Sigma^{1/2})=\left\|\Sigma^{1/2}\right\|_{S_{2}}^2-\sum_{i=1}^{k}s_{i}^2 (\Sigma^{1/2})$. It follows that
\begin{eqnarray}\label{eq_upper_tail_eigenvalue_ky_fan}
\sum_{i>k}s_{i}^2 (\Sigma^{1/2})\leqslant \left\|\Sigma^{1/2}\right\|_{S_{2}}^2 -\mathrm{tr}\left (\Sigma\left (I_{p}-P \right ) \right ).
\end{eqnarray}
We just need to lower bound $\mathrm{tr}\left (\Sigma\left (I_{p}-P \right ) \right )$ in terms of $\left\|\Sigma^{1/2}\right\|_{S_{2}}^2$.
Consider $\Sigma $ and $\Sigma P$ separately, we have the following identity:
\[
\left\|P\Sigma^{1/2}\right\|_{S_{2}}^2 = \mathrm{tr}(\Sigma)-\mathrm{tr}\left (\Sigma\left (I_{p}-P \right ) \right ),
\]
so 
\begin{eqnarray}\label{eq_tr_sigma}
\mathrm{tr}\left (\Sigma\left (I_{p}-P \right ) \right )=\left\|\Sigma^{1/2}\right\|_{S_{2}}^2-\left\|P\Sigma^{1/2}\right\|_{S_{2}}^2.
\end{eqnarray}
Substitute Equation \ref{eq_tr_sigma} into Equation \ref{eq_upper_tail_eigenvalue_ky_fan}, then we just need to upper bound $\left\|P\Sigma^{1/2}\right\|_{S_{2}}^2$. However, by definition of $\left\|\cdot\right\|_{S_{2}}$ and property of Frobenius norm, we have 
\[
\left\|P\Sigma^{1/2}\right\|_{S_{2}}^2=\left\|\Sigma^{1/2}\right\|_{S_{2}}^2-\left\|P^{C}\Sigma^{1/2}\right\|_{S_{2}}^2,
\]
where $P^{C}$ is complement of projector $P$. Recall that $\mathrm{rank}(P)=p-k$, so we can set $P$ picking $p-k$ rows of $\Sigma^{1/2}$, so $P^{C}$ picks $k$ rows of $\Sigma^{1/2}$ and it has lower bound $\left\|\Sigma^{1/2}\right\|_{S_{2}}/(2\sqrt{p})$ by Equation \ref{eq_lower_ell2_row}. Therefore, we have
\[
\sum_{i>k}s_{i}^2 (\Sigma^{1/2})\leqslant \left (1-\frac{k}{4p} \right )\left\|\Sigma^{1/2}\right\|_{S_{2}}^2.
\]
and immediately,
\begin{eqnarray}\label{eq_upper_tail_singular_value}
\sum_{i=1}^{p}\lambda_{i}(\Sigma)\geqslant \left (1-\frac{k}{4p} \right )^{-1}\sum_{i>k}s_{i}^2 (\Sigma^{1/2})=\left (1-\frac{k}{4p} \right )^{-1}\sum_{i>k}\lambda_{i}(\Sigma).
\end{eqnarray}
Secondly, we upper bound $\left\|\Sigma^{1/2}\right\|_{S_{4}}$.\par
By Holder's inequality, 
\[
\left\|\Sigma\right\|_{S_{2}}^2=\sum_{i=1}^{k}s_{i}^2(\Sigma)+\sum_{i>k}s_{i}^2(\Sigma)\leqslant \sqrt{k}\left\|\Sigma\right\|_{S_{4}}^2+\sum_{i>k}s_{i}^2 (\Sigma).
\]
So we have
\begin{eqnarray}\label{eq_lower_tail_singular_value}
\sum_{i=1}^{p}\lambda_{i}^2(\Sigma)=\left\|\Sigma\right\|_{S_{2}}^2\leqslant \left (1-\sqrt{\frac{k}{\mathrm{srank}_{4}(\Sigma)}} \right )^{-1}\sum_{i>k}\lambda_{i}^2 (\Sigma),
\end{eqnarray}
by definition of stable rank.\par
Combining Equation \ref{eq_lower_tail_singular_value} and Equation \ref{eq_upper_tail_singular_value}, Lemma \ref{Lemma_lower_stable_rank} is proved.
\end{proof}

The rest of the proof is based on the following Lemma:
\begin{Lemma}[\cite{mendelson2019stable}]\label{Lemma_semi_finished}
If random vector $X$ satisfies the wSBA with constant $(\mathcal{L},\kappa)$, and $(e_{i})_{i=1}^{p}$ are ONB of $\mathbb{R}^{p}$, and $\Sigma^{1/2}:\mathbb{R}^p\to\mathbb{R}^p$ satisfies Equation \ref{eq_paley_zygmund}, then for $\varepsilon\in (0,1)$, we have
\[
\mathbb{P}\left\{\left | \left \{ i\leqslant p:\, \left |\left <\Sigma^{1/2}Z,e_{i} \right > \right |\geqslant \varepsilon\sqrt{\frac{\mathrm{tr}(\Sigma)}{p}} \right \} \right |\leqslant c_{0}p \right \}\leqslant \sum_{j\leqslant \ell}\left (\frac{e\mathcal{L}\varepsilon}{c_{0}/2} \right )^{\left|\sigma_{j}\right|/(c_{0}/2)},
\]
where $\sigma_{j}$, $c_{0}$, $\ell$ are the same as Lemma \ref{Lemma_decompose}.
\end{Lemma}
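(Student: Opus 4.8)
The plan is to reach $\mathfrak{R}_{k}(\Sigma)$ in two stages. First I would establish the ``semi-finished'' bound of Lemma~\ref{Lemma_semi_finished} by running the Mendelson--Stable small-ball argument block-by-block, the only modification being that the anisotropy of $X=\Sigma^{1/2}Z$ is absorbed through the restricted-invertibility conclusion $\|((\Sigma^{1/2})^{*}P_{\sigma_{j}}^{*})^{-1}\|\le 4$ of Lemma~\ref{Lemma_decompose}. Second I would convert the resulting right-hand side $\sum_{j\le\ell}(e\mathcal{L}\varepsilon/(c_{0}/2))^{|\sigma_{j}|/(c_{0}/2)}$ into $\mathfrak{R}_{k}(\Sigma)$ by feeding in the cardinality bounds from Lemma~\ref{Lemma_decompose} together with the stable-rank lower bound of Lemma~\ref{Lemma_lower_stable_rank}.

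For the first stage I would fix a block $\sigma_{j}$ produced by Lemma~\ref{Lemma_decompose}, set $k_{j}:=|\sigma_{j}|$, and estimate the probability $p_{j}$ that at most a $(1-c_{0}/2)$-fraction of the coordinates $\{|\langle\Sigma^{1/2}Z,e_{i}\rangle| : i\in\sigma_{j}\}$ exceed the threshold $\varepsilon\sqrt{\mathrm{tr}(\Sigma)/p}$. Choosing which coordinates are small costs a combinatorial factor bounded by $(e/(c_{0}/2))^{k_{j}/(c_{0}/2)}$; on the complementary event the vector $P_{\sigma_{j}}\Sigma^{1/2}Z$ lies in a small Euclidean ball, and because $\|((\Sigma^{1/2})^{*}P_{\sigma_{j}}^{*})^{-1}\|\le 4$ while $\|\Sigma^{1/2}e_{i}\|_{\ell_{2}}\ge\sqrt{\mathrm{tr}(\Sigma)/(2p)}$ on $\sigma_{0}$ (Equation~\ref{eq_lower_ell2_row}), this forces $P_{F_{j}}Z$ into a ball of radius $\lesssim\varepsilon\sqrt{k_{j}}$ inside a $k_{j}$-dimensional subspace $F_{j}$; the wSBA of the isotropic vector $Z$ then gives $p_{j}\le (e\mathcal{L}\varepsilon/(c_{0}/2))^{k_{j}/(c_{0}/2)}$ after renaming constants. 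On the bad event $E=\{|\{i : |\langle\Sigma^{1/2}Z,e_{i}\rangle|\ge\varepsilon\sqrt{\mathrm{tr}(\Sigma)/p}\}|\le c_{0}p\}$, disjointness of the blocks leaves too few large coordinates for every block to meet its quota, so at least one block is ``mostly small''; a union bound over $j\le\ell$ then yields $\mathbb{P}(E)\le\sum_{j\le\ell}(e\mathcal{L}\varepsilon/(c_{0}/2))^{|\sigma_{j}|/(c_{0}/2)}$, which is Lemma~\ref{Lemma_semi_finished}.

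For the second stage, put $a:=e\mathcal{L}\varepsilon/(c_{0}/2)$ and $k_{4}:=\mathrm{srank}_{4}(\Sigma^{1/2})$, and work in the regime $a<1$ (small $\varepsilon$), the only regime in which the estimate is non-trivial. By Lemma~\ref{Lemma_decompose} every $k_{j}\ge c_{0}^{4}k_{4}/2048$, so each summand is at most $a^{c_{0}^{3}k_{4}/1024}$, and since the $\sigma_{j}$ are disjoint subsets of a set of size $\le c_{0}p$ we get $\ell\le 2048\,p/(c_{0}^{3}k_{4})$, hence
\[
\mathbb{P}(E)\ \le\ \frac{2048\,p}{c_{0}^{3}k_{4}}\,a^{c_{0}^{3}k_{4}/1024}.
\]
The map $x\mapsto x^{-1}a^{\beta x}$ is strictly decreasing on $(0,\infty)$ for $a<1$, $\beta>0$, so I may replace $k_{4}$ by the smaller quantity $\frac{16p^{2}}{(4p-k)^{2}}R_{k,2}(\Sigma)$ supplied by Lemma~\ref{Lemma_lower_stable_rank} (recall $R_{k,2}(\Sigma)=(1-\sqrt{k/\mathrm{srank}_{4}(\Sigma)})R_{k}(\Sigma)$). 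Regrouping, the prefactor becomes $\frac{1024}{c_{0}^{3}}\cdot\frac{(4p-k)^{2}}{8p\,R_{k,2}(\Sigma)}$ and the exponential becomes $c^{\,16p^{2}R_{k,2}(\Sigma)/(4p-k)^{2}}$ with $c:=a^{c_{0}^{3}/1024}<1$; absorbing the constant $1024/c_{0}^{3}$ (depending only on $\delta_{1},\delta_{2}$) into $\lesssim$ gives precisely $\mathfrak{R}_{k}(\Sigma)$, and since Lemma~\ref{Lemma_lower_stable_rank} holds for every $0\le k\le p-1$ so does the conclusion.

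The main obstacle is the per-block small-ball estimate underlying Lemma~\ref{Lemma_semi_finished}: one must carry the scale $\varepsilon\sqrt{\mathrm{tr}(\Sigma)/p}$ through the non-orthogonal map $\Sigma^{1/2}$ restricted to $\sigma_{j}$ without losing the $\sqrt{k_{j}}$-scaling that wSBA demands, which is exactly where the $4$-boundedness of the restricted inverse and the uniform row-norm lower bound on $\sigma_{0}$ have to be played against each other. Everything after Lemma~\ref{Lemma_semi_finished} is bookkeeping: tracking the constants $2048$ and $c_{0}$ from the decomposition, the factor $16p^{2}/(4p-k)^{2}$ coming from the Ky Fan step in Lemma~\ref{Lemma_lower_stable_rank}, and checking the monotonicity that licenses inserting the effective-rank lower bound into the exponent and the prefactor at the same time.
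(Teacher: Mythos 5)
The first thing to note is that the paper does not actually prove Lemma~\ref{Lemma_semi_finished}: it cites \cite{mendelson2019stable} and explicitly remarks ``This Lemma is not explicitly given in \cite{mendelson2019stable}.'' So there is no paper proof to compare against; you are reconstructing an argument the paper takes on faith. Your Stage~2 (feeding the cardinality bounds of Lemma~\ref{Lemma_decompose} and the stable-rank lower bound of Lemma~\ref{Lemma_lower_stable_rank} into the right-hand side, and noting the monotonicity of $x\mapsto x^{-1}a^{\beta x}$ for $a<1$) is exactly the paper's one-line ``combine the three lemmas'' proof of Theorem~\ref{theo_coordinate_small_ball}, not of the lemma at hand, so I set it aside. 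The relevant content is Stage~1.

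Stage~1 has a genuine gap in the reduction to a per-block small-ball estimate. You assert that on the bad event $E=\{|S|\le c_{0}p\}$ with $S=\{i:|\langle\Sigma^{1/2}Z,e_{i}\rangle|\ge\varepsilon\sqrt{\mathrm{tr}(\Sigma)/p}\}$, ``disjointness of the blocks leaves too few large coordinates for every block to meet its quota, so at least one block is mostly small,'' and then union-bound over the $\ell$ blocks. But Lemma~\ref{Lemma_decompose} only guarantees $\sum_{j\le\ell}|\sigma_{j}|\ge c_{0}p/2$. If every block met the quota of having more than a $(1-c_{0}/2)$-fraction of large coordinates, the total number of large coordinates would still only be at least $(1-c_{0}/2)\cdot c_{0}p/2<c_{0}p$, which is perfectly compatible with $|S|\le c_{0}p$. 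So the event $E$ does not force any block to be mostly small, and the union bound has nothing to bound; one would need the threshold in $E$ to be something like $c_{0}p/4$ (or a larger cover $\sum_{j}|\sigma_{j}|$) for the pigeonhole to close. Separately, the combinatorial factor you quote, $(e/(c_{0}/2))^{k_{j}/(c_{0}/2)}$, is not what the binomial estimate gives: for choosing $m\approx(c_{0}/2)k_{j}$ small coordinates out of $k_{j}$ one has $\binom{k_{j}}{m}\le(ek_{j}/m)^{m}\le(e/(c_{0}/2))^{(c_{0}/2)k_{j}}$, whose exponent is $(c_{0}/2)k_{j}$, smaller than the $k_{j}/(c_{0}/2)=2k_{j}/c_{0}$ appearing both in your sketch and in the paper's stated bound; you appear to have matched the stated form rather than derived it. Both of these need to be resolved (very possibly by tracking down and correcting the constants in Lemma~\ref{Lemma_semi_finished} itself, which the paper does not verify) before the argument is sound.
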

This Lemma is not explicitly given in \cite{mendelson2019stable}.\par
Using this, we can prove Theorem \ref{theo_coordinate_small_ball}:
\begin{proof}[Proof of Theorem \ref{theo_coordinate_small_ball}]
By Lemma \ref{Lemma_semi_finished}, Lemma \ref{Lemma_lower_stable_rank} and Lemma \ref{Lemma_decompose}, Theorem \ref{theo_coordinate_small_ball} is proved easily.
\end{proof}

\subsection{Lower bound of smallest singular value}\label{subsection_lower_bound_smallest_singular_value}

In this subsection, we proceed step 2 and 3. Build an epsilon-Net on $S^{p-1}$, obtain uniform lower bound of smallest singular value on it and extend it on the whole $S^{p-1}$.
\begin{proof}[Proof of Lemma \ref{Lemma_smallest_singular_value}]
Fix a random vector $X_{j}\in\mathbb{R}^p$. Consider $p$ unit vectors $(e_{i})_{i=1}^{p}$ which forming an ONB of $\mathbb{R}^{p}$, then by Theorem \ref{theo_coordinate_small_ball}, with probability at least $1-\mathfrak{R}_{k}(\Sigma)$, there exists a subset $\sigma_{j}$ with cardinality at least $c_{0}p$, and the vectors in it satisfy that for all $e_{i}\in \sigma_{j}$,
\begin{eqnarray}\label{eq_single_vector}
\left |\left < \Sigma^{1/2}Z_{j},e_{i} \right >\right |\geqslant \varepsilon\sqrt{\frac{\mathrm{tr}(\Sigma)}{p}}.
\end{eqnarray}
Each $X_{j}$ has such a subset $\sigma_{j}\subset (e_{i})_{i=1}^{p}$ of cardinality at least $c_{0}p$ with probability at least $1-\mathfrak{R}_{k}(\Sigma)$. Pick a $t\in (e_{i})_{i=1}^{p}$ randomly. If $t\in\sigma_{j}$, then Equation \ref{eq_single_vector} holds, that is to say, we can have a lower bound on the inner product at this time.\par
Denote $1_{j}$ as $1_{\left\{t\notin\sigma_{j}\right \}}$, then $\mathbb{E}1_{j}\leqslant 1-c_{0}(1-\mathfrak{R}_{k}(\Sigma))$. By Bernstein's inequality, with probability at least $1-\mathrm{exp}(-\mathrm{min}\left\{t^2, t \right\}N)$,
\[
\sum_{j=1}^{N}1_{j}\leqslant N\mathbb{E}1_{j}+t\leqslant \frac{3}{2}N(c_{0}\mathfrak{R}_{k}(\Sigma)+1-c_{0}).
\]
by setting $t=(c_{0}\mathfrak{R}_{k}(\Sigma)+1-c_{0})/2$, then with probability at least
\[
1-\mathrm{exp}\left (-N\left (c_{0}\mathfrak{R}_{k}(\Sigma)+1-c_{0}\right )/2 \right ),
\]
we have $\sum_{j=1}^{N}1_{j}\leqslant 3N(c_{0}\mathfrak{R}_{k}(\Sigma)+1-c_{0})/2$, that is to say,
\[
\sqrt{\sum_{j=1}^{N}\left<\Sigma^{1/2}Z_{j},u\right>^2}\geqslant \varepsilon\sqrt{\frac{3c_{0}(1-\mathfrak{R}_{k}(\Sigma))-1}{2}N}\sqrt{\frac{\mathrm{tr}(\Sigma)}{p}},\quad \forall u\in\sigma_{j}.
\]
Build an $\eta$-Net $\Gamma_{\varepsilon}$ on $B_{2}^p$.\par
Set
\[
\eta=\frac{\sqrt{\mathrm{tr}(\Sigma)}}{2\sqrt{2}\left\|\Gamma\right\|}\varepsilon\sqrt{3c_{0}(1-\mathfrak{R}_{k}(\Sigma))-1}\sqrt{\frac{N}{p}}
\]
By $\log{\left|\Gamma_{\eta}\right|}\leqslant p\log{\left (1+2/\eta\right)}$, we have
\[
\log{\left|\Gamma_{\eta}\right|}\leqslant p\log{\left (1+\frac{4\sqrt{2}}{\varepsilon}\frac{\left\|\Gamma\right\|}{\sqrt{\mathrm{tr}(\Sigma)}}\sqrt{\frac{p}{N}}\cdot\frac{1}{\sqrt{3c_{0}(1-\mathfrak{R}_{k}(\Sigma))-1}} \right )}
\]
We just need to ensure
\begin{eqnarray}
p\log{\left (1+\frac{\left\|\Gamma\right\|}{\sqrt{\mathrm{tr}(\Sigma)}}\sqrt{\frac{p}{N}}\cdot\frac{c}{\sqrt{3c_{0}(1-\mathfrak{R}_{k}(\Sigma))-1}} \right )}
\leqslant N\frac{c_{0}\mathfrak{R}_{k}(\Sigma)+1-c_{0}}{2}+\log{p}
\end{eqnarray}
by choosing $k$ wisely. Here we just need to repeat the selecting process no more than $\left \lceil \left|\Gamma_{\eta}\right|/(c_{0}p)\right \rceil$ times, so to make Equation \ref{eq_single_vector} holds uniformly for all elements in $\Gamma_{\eta}$, we have to pay a $\log{\left(\left \lceil \left|\Gamma_{\eta}\right|/(c_{0}p)\right \rceil\right)}$ in exponential term $\mathrm{exp}\left (-N\left (c_{0}\mathfrak{R}_{k}(\Sigma)+1-c_{0}\right )/2 \right )$ and make sure this probability is no greater than $1$.\par
For upper bound of $\left\|\Gamma\right\|$, we use Lemma \ref{Lemma_quadratic_process}. With probability at least $1-\mathrm{exp}(-c_{0}N)$,
\[
\left\|\Gamma\right\|=\sqrt{\underset{t\in S^{p-1}}{\mathrm{max}}\sum_{i=1}^{N}\left <\Gamma_{\cdot,i} ,t\right>^2_{\ell_{2}}}\leqslant \sqrt{N}\sqrt{C \left(d \frac{\tilde{\Lambda}(D)}{\sqrt{k_{D}}} +\frac{\tilde{\Lambda}^2(D)}{k_{D}}\right)+\lambda_{1}(\Sigma)}.
\]
By choosing $k=k^{\ast}$ defined in Equation \ref{eq_k_ast}, the probability can be lower bounded by $1-\mathrm{exp}(-\nu)$, where $\nu$ is defined in Equation \ref{eq_nu}.
In summary, with probability at least $1-\mathrm{exp}(-c_{0}N)-\mathrm{exp}(-\nu)$, we have
\[
s_{\mathrm{min}}(\mathbf{X})\geqslant \varepsilon\sqrt{\frac{3c_{0}(1-c_{0})-1}{8}N}\sqrt{\frac{\mathrm{tr}(\Sigma)}{p}}
\]
\end{proof}

\section{Prediction error}\label{section_prediction_error}

In this section, we obtain an upper bound of prediction error based on upper bound of estimation risk by using localization method introduced in subsetcion \ref{subsection_generalization_error_interpolating_erm}.
\begin{thm}[Prediction error]\label{Lemma_prediction_error}
If random vector $X=\Sigma^{1/2}Z\in\mathbb{R}^p$ satisfies wSBA with constants $(\mathcal{L},\kappa)$, and $\Sigma$ that satisfy Equation \ref{eq_paley_zygmund}. Then with probability at least $1-\mathrm{exp}\left(-\nu \right)-2\mathrm{exp}(-cN)$, prediction error satisfies
\[
\left\|\Sigma^{1/2}(\hat{\alpha}-\alpha^{\ast}) \right\|_{\ell_{2}}\leqslant r^{\ast},
\]
where $c$ is an absolute constant.
\end{thm}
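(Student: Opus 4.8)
The plan is to carry out the exclusion argument sketched in subsection \ref{subsection_generalization_error_interpolating_erm}, instantiated for the linear class $\mathcal{F}_{H_{r,\rho}}$ with $\rho$ fixed as in Equation \ref{eq_rho}. First I would condition on the high-probability event from Theorem \ref{theo_estimation_error}, so that $\|\hat\alpha-\alpha^{\ast}\|_{\ell_2}\leqslant\rho$, i.e. $\hat f=f_{\hat\alpha}$ lies in $\mathcal{F}_{B(\rho)}$; this costs a factor $\mathrm{exp}(-\nu)+\mathrm{exp}(-cN)$ in probability. The remaining task is to show that, with high probability, every $f\in\mathcal{F}_{H_{r,\rho}}$ with $\|\Sigma^{1/2}(\alpha-\alpha^{\ast})\|_{\ell_2}=r>r^{\ast}$ has empirical excess risk $\mathrm{P}_N\mathcal{L}_f>-\tfrac12\|\xi\|_{\psi_2}^2$, which (combined with the observation $\mathrm{P}_N\mathcal{L}_{\hat f}=-\mathrm{P}_N\xi^2\leqslant -\tfrac12\|\xi\|_{\psi_2}^2$ on an event of probability $1-\mathrm{exp}(-N/2)$) forces $\|\Sigma^{1/2}(\hat\alpha-\alpha^{\ast})\|_{\ell_2}\leqslant r^{\ast}$.

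For the exclusion step I would use the second decomposition, Equation \ref{eq_decompose}: writing $\theta=r^{\ast}/r\in(0,1)$,
\[
\inf_{f\in\mathcal{F}_{H_{r,\rho}}}\mathrm{P}_N\mathcal{L}_f\ \geqslant\ \theta^{-2}\bigl((r^{\ast})^2-Q_{r^{\ast},\rho}\bigr)-2M_{r^{\ast},\rho}.
\]
So it suffices to control the quadratic process $Q_{r^{\ast},\rho}$ and the multiplier process $M_{r^{\ast},\rho}$ over $H_{r^{\ast},\rho}=B(\rho)\cap B_{\Sigma}(r^{\ast})$. For $Q_{r^{\ast},\rho}$ I would apply Lemma \ref{Lemma_quadratic_process} with $2^{s_0}=k_{\mathcal{F}}\sim p$, using that by definition of $r_1^{\ast}$ we have $\tilde\Lambda(H_{r^{\ast},\rho})\leqslant\sqrt{\zeta_1 p}\,r^{\ast}$ and by definition of $r_2^{\ast}$ that $d_q(\mathcal{F}_{r^{\ast},\rho})\leqslant\zeta_2 r^{\ast}$; choosing $\zeta_1,\zeta_2$ small enough this yields $Q_{r^{\ast},\rho}\leqslant\tfrac14(r^{\ast})^2$ with probability at least $1-2\mathrm{exp}(-c_0 p)$ (and $\geqslant 1-2\mathrm{exp}(-c_0 N)$ since $p>N$). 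For $M_{r^{\ast},\rho}$ I would apply Lemma \ref{Lemma_upper_multiplier} with $u,w$ absolute constants and $2^{s_0}\sim p$, using $\tilde\Lambda(H_{r^{\ast},\rho})\lesssim\sqrt{p}\,r^{\ast}$ again together with Equation \ref{eq_gamma2_diameter_ellipsoid}; this gives $M_{r^{\ast},\rho}\lesssim\|\xi\|_{\psi_2}\sqrt{p}\,r^{\ast}\cdot\tfrac{1}{\sqrt N}$-type bound, and here the choice of $\rho$ in Equation \ref{eq_rho}, which scales like $\sqrt{p/\mathrm{tr}(\Sigma)}\,\|\xi\|_{\psi_2}/\varepsilon$, is exactly what makes the multiplier term absorbable against the noise floor $-\tfrac12\|\xi\|_{\psi_2}^2$ after dividing by $\theta^2$. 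Combining, on the intersection of these events $\inf_f\mathrm{P}_N\mathcal{L}_f\geqslant\tfrac34\theta^{-2}(r^{\ast})^2-2M_{r^{\ast},\rho}>-\tfrac12\|\xi\|_{\psi_2}^2$, giving the exclusion.

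Finally I would collect the probabilities: Theorem \ref{theo_estimation_error} contributes $\mathrm{exp}(-\nu)+\mathrm{exp}(-cN)$, the Bernstein bound on $\tfrac1N\sum\xi_i^2$ contributes $\mathrm{exp}(-N/2)$, and the quadratic- and multiplier-process bounds each contribute $\mathrm{exp}(-cN)$, so a union bound yields the claimed $1-\mathrm{exp}(-\nu)-2\mathrm{exp}(-cN)$ (absorbing the $\mathrm{exp}(-N/2)$ into one of the $\mathrm{exp}(-cN)$ terms by shrinking $c$). The main obstacle I anticipate is the bookkeeping in the multiplier step: one must verify that $\tilde\Lambda(H_{r^{\ast},\rho})$ (not merely $\mathbb{E}\|G\|$) is genuinely bounded by $\sqrt{\zeta_1 p}\,r^{\ast}$ with a usable constant in the heavy-tailed regime — the excerpt flags precisely that estimating $\tilde\Lambda(H_{r,\rho})$ is non-trivial unless the class is sub-gaussian — and that after the $\theta^{-2}$ inflation the quadratic gain $\tfrac34\theta^{-2}(r^{\ast})^2$ still dominates $2M_{r^{\ast},\rho}$ uniformly over all $r>r^{\ast}$; this is where the specific definitions of $r_1^{\ast},r_2^{\ast}$ and $\rho$ must be used in a tightly coordinated way rather than as black boxes.
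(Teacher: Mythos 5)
Your proposal is correct and follows essentially the same route as the paper: condition on the estimation-error event from Theorem~\ref{theo_estimation_error} so that $\hat\alpha-\alpha^{\ast}\in B(\rho)$, then run the exclusion argument via the decomposition in Equation~\ref{eq_decompose}, bounding $Q_{r^{\ast},\rho}$ with Lemma~\ref{Lemma_quadratic_process} and $M_{r^{\ast},\rho}$ with Lemma~\ref{Lemma_upper_multiplier} and invoking the definitions of $r_{1}^{\ast},r_{2}^{\ast}$ to make the fixed point close, exactly as in the paper's proof. The one structural piece you describe informally rather than cite is Lemma~\ref{Lemma_localization} (Lemma~3 of \cite{chinot2020benign}), which is what the paper uses to convert ``all $\alpha$ with $r>r^{\ast}$ are excluded'' plus ``$\hat\alpha-\alpha^{\ast}\in B(\rho)$'' plus ``$\mathrm{P}_{N}\mathcal{L}_{\hat\alpha}\leqslant-\tfrac12\|\xi\|_{\psi_2}^2$'' into the conclusion $\|\Sigma^{1/2}(\hat\alpha-\alpha^{\ast})\|_{\ell_2}\leqslant r^{\ast}$; you re-derive its content rather than quoting it, which is fine.
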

The proof is a kind of localization argument. That is to say, we are going to prove $\hat{\alpha}-\alpha^{\ast}$ lies in a localized area with respect to $\left\|\cdot\right\|_{\Sigma}$. Firstly, we need a localization Lemma from \cite{chinot2020benign}:
\begin{Lemma}[Localization: Lemma 3 in \cite{chinot2020benign}]\label{Lemma_localization}
With probability at least $1-\mathrm{exp}(-N/16)$, we have $\mathrm{P}_{N}\mathcal{L}_{\hat{\alpha}}\leqslant -\left\|\xi\right\|_{\psi_{2}}^2/2$. Moreover, for any $r$, let $\Omega_{r,\rho}$ denote the following event
\[
\Omega_{r,\rho}=\left\{ \alpha\in\mathbb{R}^p:\, \alpha-\alpha^{\ast}\in B(\rho)\backslash B_{\Sigma}(r),\, \text{and }\mathrm{P}_{N}\mathcal{L}_{\alpha}>-\frac{1}{2}\left\|\xi\right\|_{\psi_{2}}^2 \right \}.
\]
On the event
\begin{eqnarray}\label{eq_localization}
\Omega_{r,\delta}\cap \left\{\hat{\alpha}-\alpha^{\ast}\in B(\rho)\right\}\cap\left\{\mathrm{P}_{N}\mathcal{L}_{\hat{\alpha}}\leqslant-\frac{1}{2}\left\|\xi\right\|_{\psi_{2}}^2\right\},
\end{eqnarray}
prediction risk has upper bound $r$, that is to say,
\[
\left\|\Sigma^{1/2}\left(\hat{\alpha}-\alpha^{\ast}\right)\right\|_{\ell_{2}}\leqslant r.
\]
\end{Lemma}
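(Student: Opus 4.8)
The statement is Lemma 3 of \cite{chinot2020benign}; the argument is short, so the plan is simply to reproduce it. It splits into the high‑probability bound $\mathrm{P}_{N}\mathcal{L}_{\hat{\alpha}}\leqslant-\frac{1}{2}\left\|\xi\right\|_{\psi_{2}}^{2}$ and the deterministic localization implication, and the second needs no probabilistic input once one interpolation identity is recorded. Since $\hat{\alpha}$ interpolates, $\mathbf{X}\hat{\alpha}=\mathbf{Y}$, so for every $i$ we have $\left<X_{i},\hat{\alpha}-\alpha^{\ast}\right>=\left<X_{i},\hat{\alpha}\right>-\left<X_{i},\alpha^{\ast}\right>=Y_{i}-\left<X_{i},\alpha^{\ast}\right>=\xi_{i}$, and hence
\begin{align*}
\mathrm{P}_{N}\mathcal{L}_{\hat{\alpha}}
&=\frac{1}{N}\sum_{i=1}^{N}\left[\left(\left<X_{i},\hat{\alpha}\right>-Y_{i}\right)^{2}-\left(\left<X_{i},\alpha^{\ast}\right>-Y_{i}\right)^{2}\right]\\
&=\frac{1}{N}\sum_{i=1}^{N}\left(0-\xi_{i}^{2}\right)=-\mathrm{P}_{N}\xi^{2}.
\end{align*}

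For the first assertion I would apply Bernstein's inequality to the i.i.d.\ sub‑exponential variables $\xi_{i}^{2}$ — this is exactly the estimate already written down just before \eqref{eq_decompose} — to obtain the lower deviation $\mathrm{P}_{N}\xi^{2}\geqslant\frac{1}{2}\left\|\xi\right\|_{\psi_{2}}^{2}$ on an event of probability at least $1-\mathrm{exp}(-N/2)$, which in particular is at least $1-\mathrm{exp}(-N/16)$. Combining this with the identity of the previous paragraph gives $\mathrm{P}_{N}\mathcal{L}_{\hat{\alpha}}=-\mathrm{P}_{N}\xi^{2}\leqslant-\frac{1}{2}\left\|\xi\right\|_{\psi_{2}}^{2}$ with that probability.

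For the localization implication I would argue purely deterministically on the intersection event in \eqref{eq_localization}, interpreting its first factor, written "$\Omega_{r,\rho}$", as the event on which the random set $\Omega_{r,\rho}$ defined in the lemma contains the whole annular shell $\{\alpha:\alpha-\alpha^{\ast}\in B(\rho)\backslash B_{\Sigma}(r)\}$ — equivalently, the event on which every $\alpha$ with $\alpha-\alpha^{\ast}\in B(\rho)\backslash B_{\Sigma}(r)$ satisfies $\mathrm{P}_{N}\mathcal{L}_{\alpha}>-\frac{1}{2}\left\|\xi\right\|_{\psi_{2}}^{2}$. Suppose, towards a contradiction, that $\left\|\Sigma^{1/2}(\hat{\alpha}-\alpha^{\ast})\right\|_{\ell_{2}}>r$, i.e.\ $\hat{\alpha}-\alpha^{\ast}\notin B_{\Sigma}(r)$. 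The same event also contains $\{\hat{\alpha}-\alpha^{\ast}\in B(\rho)\}$, so $\hat{\alpha}-\alpha^{\ast}\in B(\rho)\backslash B_{\Sigma}(r)$; hence $\hat{\alpha}\in\Omega_{r,\rho}$, which by the definition of $\Omega_{r,\rho}$ forces $\mathrm{P}_{N}\mathcal{L}_{\hat{\alpha}}>-\frac{1}{2}\left\|\xi\right\|_{\psi_{2}}^{2}$. But the third factor of the intersection event is precisely $\mathrm{P}_{N}\mathcal{L}_{\hat{\alpha}}\leqslant-\frac{1}{2}\left\|\xi\right\|_{\psi_{2}}^{2}$, a contradiction. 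Therefore $\left\|\Sigma^{1/2}(\hat{\alpha}-\alpha^{\ast})\right\|_{\ell_{2}}\leqslant r$ on that event, which is the claimed bound.

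I do not expect any genuine obstacle inside this lemma: once $\left<X_{i},\hat{\alpha}-\alpha^{\ast}\right>=\xi_{i}$ is noted, the statement is essentially a tautology, and no convexity, star‑shapedness, or $\theta^{-2}$‑rescaling device is needed here. The only point that demands care is the bookkeeping of the symbol $\Omega_{r,\rho}$, which is simultaneously a random subset of $\mathbb{R}^{p}$ (as literally defined) and, inside \eqref{eq_localization}, the event that this subset engulfs the shell $B(\rho)\backslash B_{\Sigma}(r)$; the write‑up must keep those two uses distinct (the subscript "$\delta$" in \eqref{eq_localization} is a typo for $\rho$). The hard work sits downstream and outside the present lemma: verifying that this "$\Omega$"-event actually occurs with high probability at the radius $r=r^{\ast}$ — equivalently, that every $\alpha$ with $\alpha-\alpha^{\ast}\in B(\rho)\backslash B_{\Sigma}(r^{\ast})$ has empirical excess risk strictly above $-\frac{1}{2}\left\|\xi\right\|_{\psi_{2}}^{2}$ — which is the exclusion step driven by the decomposition \eqref{eq_decompose} together with the multiplier and quadratic process bounds of Lemmas \ref{Lemma_upper_multiplier} and \ref{Lemma_quadratic_process}, and is where $r_{1}^{\ast}$, $r_{2}^{\ast}$ and the effective ranks enter.
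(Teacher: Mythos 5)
Your deterministic localization step is correct and well handled: the identity $\langle X_i,\hat{\alpha}-\alpha^{\ast}\rangle=\xi_i$, the reading of "$\Omega_{r,\rho}$" inside \eqref{eq_localization} as the event that the whole shell $\{\alpha:\alpha-\alpha^{\ast}\in B(\rho)\setminus B_{\Sigma}(r)\}$ is contained in the random set $\Omega_{r,\rho}$, the contradiction argument, and the diagnosis of the $\delta\to\rho$ typo are all exactly what is needed.

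The probabilistic half, however, has a genuine gap that you have papered over. You write that Bernstein's inequality gives "the lower deviation $\mathrm{P}_N\xi^2\geqslant\frac{1}{2}\|\xi\|_{\psi_2}^2$" and that "this is exactly the estimate already written down just before \eqref{eq_decompose}." It is not: the display in the paper is the \emph{upper} deviation $\frac{1}{N}\sum_i\xi_i^2\leqslant\frac{1}{2}\|\xi\|_{\psi_2}^2$, which, combined with $\mathrm{P}_N\mathcal{L}_{\hat{f}}=-\mathrm{P}_N\xi^2$, yields $\mathrm{P}_N\mathcal{L}_{\hat{f}}\geqslant-\frac{1}{2}\|\xi\|_{\psi_2}^2$ — the wrong direction. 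You silently replaced $\leqslant$ by $\geqslant$ without noticing that the paper's displayed inequality cannot support the conclusion. More substantively, Bernstein concentrates $\mathrm{P}_N\xi^2$ around $\mathbb{E}\xi^2$, so its lower tail gives $\mathrm{P}_N\xi^2\geqslant\frac{1}{2}\mathbb{E}\xi^2$, \emph{not} $\mathrm{P}_N\xi^2\geqslant\frac{1}{2}\|\xi\|_{\psi_2}^2$, and there is no universal inequality $\mathbb{E}\xi^2\gtrsim\|\xi\|_{\psi_2}^2$. Under the standard normalization $\|\xi\|_{\psi_2}=\inf\{t>0:\mathbb{E}\exp(\xi^2/t^2)\leqslant2\}$ and Gaussian $\xi$ one has $\mathbb{E}\xi^2=\frac{3}{8}\|\xi\|_{\psi_2}^2<\frac{1}{2}\|\xi\|_{\psi_2}^2$, so $\mathrm{P}_N\xi^2\to\mathbb{E}\xi^2$ and the event $\{\mathrm{P}_N\xi^2\geqslant\frac{1}{2}\|\xi\|_{\psi_2}^2\}$ eventually has probability tending to $0$, not $1$. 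The original Lemma~3 of \cite{chinot2020benign} states the threshold as $\sigma^2/2$ with $\sigma^2=\mathbb{E}\xi^2$, which is what Bernstein actually delivers; the present paper's substitution of $\|\xi\|_{\psi_2}^2$ for $\sigma^2$ is a transcription error, and your proof should flag this and prove the (correct) $\sigma^2$-version rather than assert the $\psi_2$-version follows from an estimate that does not appear in the paper.
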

Lemma \ref{Lemma_localization} reduces upper bound of prediction risk to Equation \ref{eq_localization}. Probability of event $\left\{\hat{\alpha}-\alpha^{\ast}\in B(\rho)\right\}$ can be lower bounded by estimation error, see Lemma \ref{Lemma_smallest_singular_value}. Recall that $H_{r,\rho}=B(\rho)\cap B_{\Sigma}(r)$, we just need to prove that event
\[
\underset{\alpha\, :\alpha-\alpha^{\ast}\in H_{r,\rho}}{\mathrm{inf}}\mathrm{P}_{N}\mathcal{L}_{\alpha}>-\frac{1}{2}\left\|\xi\right\|_{\psi_{2}}^2
\]
holds with high probability for $r>r^{\ast}$.\par

In this section, we are going to find lower bound of $\mathrm{P}_{N}\mathcal{L}_{\alpha}$ in terms of upper bounds of quadratic and multiplier processes according to Equation \ref{eq_decompose}.\par
Firstly, we find upper bound of quadratic process. By Lemma \ref{Lemma_quadratic_process}, with probability at least $1-\mathrm{exp}(-cN)$, we have
\begin{eqnarray}\label{eq_upper_bound_quadratic}
Q_{r,\rho}\lesssim_{q}\left(d_{q}(H_{r,\rho})\frac{\tilde{\Lambda}(H_{r,\rho})}{\sqrt{p}}+\frac{\tilde{\Lambda}^2(H_{r,\rho}) }{p}\right).
\end{eqnarray}

Secondly, we need upper bound of multiplier component. This can be done by using Lemma \ref{Lemma_upper_multiplier}. That is to say, with probability at least $1-2\mathrm{exp}\left(-cN \right)$,
\begin{eqnarray}\label{eq_upper_bound_multiplier}
M_{r,\rho}\lesssim_{q} \left\|\xi\right\|_{\psi_{2}}\frac{\tilde{\Lambda}(H_{r,\rho})}{\sqrt{p}}.
\end{eqnarray}
since $\xi$ is centered and independent with $X$. Therefore, when $r>r^{\ast}$, we have
\[
d_{q}(H_{r,\rho})\leqslant \zeta_{1}r^{\ast}, \quad \frac{\tilde{\Lambda}(H_{r,\rho})}{\sqrt{p}}\leqslant \zeta_{2} r^{\ast}.
\]
\begin{proof}[Proof of Theorem \ref{Lemma_prediction_error}]
Let $\alpha\in\alpha^{\ast}+H_{r,\rho}$. Recall that $r=\left\|\Sigma^{1/2}(\alpha-\alpha^{\ast})\right\|_{\ell_{2}}$. If $r>r^{\ast}$, by substituting Equation \ref{eq_upper_bound_multiplier} and Equation \ref{eq_upper_bound_quadratic} into Equation \ref{eq_decompose}, it follows that
\[
\underset{\alpha\in \alpha^{\ast}+H_{r,\rho}}{\mathrm{inf}}\,\mathrm{P}_{N}\mathcal{L}_{\alpha}>(r^{\ast})^2\left(\theta^{-2}-\zeta_{1}\zeta_{2}\theta^{-2}-\zeta_{2}\theta^{-2}-\zeta_{2} \right )-\frac{1}{2}\left\|\xi\right\|_{\psi_{2}}^2.
\]
Set $\zeta_{1},\zeta_{2}$ small enough, $\mathrm{RHS}>-\frac{1}{2}\left\|\xi\right\|_{\psi_{2}}^2$. By Lemma \ref{eq_localization}, Theorem \ref{Lemma_prediction_error} is proved.
\end{proof}

\section{Discussion}\label{section_discussion}

In this section, we discuss two aspects. Firstly, we discuss why it is so difficult to investigate benign overfitting beyond linear model. Secondly, we discuss a benign overfitting case without truncated effective rank.

\subsection{Why linear model?}\label{subsubsection_alpha}

In this subsection, we imagine statistical model $\mathcal{F}$ is the affine hull of sub-classes $(\mathcal{F}_{j})_{j=1}^{d}$, that is to say, for all $f\in\mathcal{F}$, there exists $f_{j}$ in each $\mathcal{F}_{j}$ and $\alpha_{j}\in\mathbb{R}$ such that $f=\sum_{j=1}^{d}\alpha_{j}f_{j}$. Denote $\alpha=(\alpha_{1},\cdots,\alpha_{d})\in\mathbb{R}^d$. Of course this is not the problem that we deal with in this paper, but considering such a general case like this would be benefit to understand the role of $\alpha$ and the difficulty to generalize benign overfitting beyond linear model.\par
Even in this kind of simple "additive model" case, benign overfitting is much more difficult. Firstly, $\hat{f}$ interpolates $(X_{i},Y_{i})_{i=1}^{N}$, but $\hat{f}_{j}$ need not interpolates them. In fact, they may differs a lot, see $\hat{f}(x)=-x+x=\hat{f}_{1}+\hat{f}_{2}$ can interpolate $(10,0)$ but $\hat{f}_{1}(10)=-\hat{f}_{2}(10)$. It is a difficult task to derive oracle inequality by studying $\mathcal{F}_{j}$.\par

If we minimizing $\left\|\alpha\right\|_{\ell_{2}}$ analogs to linear case, the minimization of $\left\|\alpha\right\|_{\ell_{2}}$ given interpolation condition $\sum_{j=1}^{p}\alpha_{j}f(X_{i})=Y_{i}$ for all $i=1,2,\cdots,N$ can be solved by Moore-Penrose inverse.\par
Condition on $(f_{j})_{j=1}^{p}$. Denote matrix $\Gamma$ as
\[
\Gamma=\left [
\begin{matrix}
f_{1}(X_{1}) & f_{2}(X_{1}) & \cdots & f_{p}(X_{1})\\
f_{1}(X_{2}) & f_{2}(X_{2}) & \cdots & f_{p}(X_{2})\\
\vdots & \vdots & \ddots & \vdots\\
f_{1}(X_{N}) & \cdots & \cdots & f_{p}(X_{N}).
\end{matrix}
\right ]_{N\times p}
\]
Denote $\mathbf{f}^{\ast}$ as $\left (f^{\ast}(X_{1}),f^{\ast}(X_{2}),\cdots, f^{\ast}(X_{N}) \right )$ and $\mathbf{\xi}$ as $\left (\xi_{1},\xi_{2},\cdots, \xi_{N} \right )$. Then the interpolation condition is equivalent to
\[
\mathrm{minimizing}\, \left\|\alpha\right\|_{\ell_{2}},\quad \mathrm{s.t.}\, \Gamma\alpha=Y.
\]
We assume $\alpha$ satisfying interpolation condition always exists. Using Moore-Penrose inverse, we have
\[
\hat{\alpha}=\Gamma^{\dagger}Y=\Gamma^{\dagger}\mathbf{f}^{\ast}+\Gamma^{\dagger}\xi.
\]
Therefore, to establish upper bound of $\left\|\hat{\alpha}\right\|_{\ell_{2}}$, we need a lower bound of the smallest singular value of $\Gamma$.\par
However, as we see in Lemma \ref{Lemma_smallest_singular_value}, the smallest singular value increases when $N$ increases, causing $\left\|\Gamma^{\dagger}\mathbf{f}^{\ast}\right\|_{\ell_{2}}$ decreasing. This phenomenon is called "signal blood" in \cite{muthukumar2020harmless}, which means that the influence caused by signal $\mathbf{f}^{\ast}$ will decline so that minimizing $\left\|\hat{\alpha}\right\|_{\ell_{2}}$ cannot reflect properties true signal unless there are some unrealistic restrictions.\par
Therefore, $\mathbf{f}^{\ast}$ should balance $\Gamma^{\dagger}$ when $N$ increase to avoid signal blood. This can be done by linear regression, where $\Gamma=\mathbf{X}$. This illustrates that why we choose linear model.

\subsection{Benign overfitting without truncated effective rank}\label{subsection_without_effective_rank}

Now, we try to establish benign overfitting without truncated effective rank, but on stable rank $r_{0}(\Sigma)$, see Equation \ref{eq_effective_rank}. Recall that a linear model on $T\subset\mathbb{R}^p$ is $\mathcal{F}_{T}=\left\{\left<\cdot,t\right>:\, t\in T\right\}$. Let $\sigma=(X_{1},\cdots,X_{N})$, then the projection of $\mathcal{F}_{T}$ by using $\sigma$ is indeed a random linear transformation of $T$. That is to say, $P_{\sigma}\mathcal{F}_{T}=\mathbf{X}T$, where $P_{\sigma}(f_{t})=(\left<X_{i},t\right>)_{i=1}^{N}$. We need lower bound of smallest singular value of $\mathbf{X}$ to derive an upper bound of estimation error, and a lower bound of quadratic component in Equation \ref{eq_decomposition_excess_risk_first}. Fortunately, this can be done by Dvoretzky-Milman Theorem, see \cite{artstein-avidan2015asymptotic} or \cite{mendelson2016dvoretzky}. Dvoretzky-Milman Theorem can hold with rather heavy-tailed random vectors, but for the sake of simplicity, we assume $(g_{i})_{i=1}^{N}$ are i.i.d. gaussian random vectors in $\mathbb{R}^p$.
\begin{Lemma}[Dovoretzky-Milman(one-side)]
There exists absolute constants $c_{1},c_{2}$ such that: If $0<\delta<\frac{1}{2}$, and
\begin{eqnarray}\label{eq_dimension_dvoretzky_milman}
N\leqslant c_{1}\frac{\delta^2}{\log{(1/\delta)}}r_{0}(\Sigma),
\end{eqnarray}
and $\Gamma=\sum_{i=1}^{N}\left<g_{i},\cdot\right>e_{i}$, where $(e_{i})_{i=1}^{N}$ are ONB of $\mathbb{R}^N$. Then with probability at least $1-2\mathrm{exp}(-c_{2}r_{0}(\Sigma)\delta^4/\log{(1/\delta)})$,
\[
(1-\delta)\sqrt{\mathrm{tr}(\Sigma)}B_{2}^{N}\subset \Gamma\left(\Sigma^{1/2}B_{2}^p \right).
\]
\end{Lemma}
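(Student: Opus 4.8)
The plan is to derive the one-sided Dvoretzky--Milman inclusion from a uniform lower bound for $\|\Gamma v\|_{\ell_2}$ over $v \in \Sigma^{1/2}S^{p-1}$, which is the natural dual formulation: the inclusion $(1-\delta)\sqrt{\mathrm{tr}(\Sigma)}\,B_2^N \subset \Gamma(\Sigma^{1/2}B_2^p)$ follows once we show that for every unit vector $\theta \in S^{N-1}$ there exists $u$ in the symmetric convex body $K := \Sigma^{1/2}B_2^p$ with $\langle \Gamma u, \theta\rangle \geq (1-\delta)\sqrt{\mathrm{tr}(\Sigma)}$, equivalently $h_{\Gamma K}(\theta) \geq (1-\delta)\sqrt{\mathrm{tr}(\Sigma)}$ for all $\theta$. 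Since $\Gamma = \sum_{i=1}^N \langle g_i, \cdot\rangle e_i$, we have $\langle \Gamma u, \theta\rangle = \sum_{i=1}^N \theta_i \langle g_i, u\rangle = \langle \sum_i \theta_i g_i, u\rangle$, and $g_\theta := \sum_i \theta_i g_i$ is a standard Gaussian vector in $\mathbb{R}^p$ for each fixed $\theta$; hence $h_{\Gamma K}(\theta) = \sup_{u \in K}\langle g_\theta, u\rangle = \|\Sigma^{1/2} g_\theta\|_{\ell_2}$. So the task reduces to: with high probability, $\inf_{\theta \in S^{N-1}} \|\Sigma^{1/2} g_\theta\|_{\ell_2} \geq (1-\delta)\sqrt{\mathrm{tr}(\Sigma)}$.

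First I would record the one-point estimate: for fixed $\theta$, $\|\Sigma^{1/2}g_\theta\|_{\ell_2}^2 = \sum_j \lambda_j(\Sigma) g_{\theta,j}^2$ has mean $\mathrm{tr}(\Sigma)$ and, being a Gaussian chaos, concentrates around $\sqrt{\mathrm{tr}(\Sigma)}$ with sub-exponential fluctuations governed by $\|\Sigma\| = \lambda_1(\Sigma)$ and $\|\Sigma\|_{S_2} = (\sum_j \lambda_j^2)^{1/2}$; concretely, by Hanson--Wright, $\mathbb{P}\{ \|\Sigma^{1/2}g_\theta\|_{\ell_2} \leq (1-t)\sqrt{\mathrm{tr}(\Sigma)}\} \leq \exp(-c\, t^2\, \mathrm{tr}(\Sigma)^2 / \|\Sigma\|_{S_2}^2) = \exp(-c\, t^2\, r_0(\Sigma))$, using that $\mathrm{tr}(\Sigma)^2/\|\Sigma\|_{S_2}^2 = r_0(\Sigma)$ in the notation of \eqref{eq_effective_rank}. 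Then I would run a standard $\varepsilon$-net / union bound argument over $S^{N-1}$: choose a $\delta'$-net $\mathcal{N}$ with $|\mathcal{N}| \leq (3/\delta')^N$, control $\sup_{\theta}|\,\|\Sigma^{1/2}g_\theta\|_{\ell_2} - \|\Sigma^{1/2}g_{\theta'}\|_{\ell_2}\,|$ via the Lipschitz map $\theta \mapsto \Gamma^T\theta$ composed with the operator norm of $\Gamma$ (which is $O(\sqrt{\mathrm{tr}(\Sigma)})$ with high probability, again by a net argument or by the same Gaussian concentration applied to the top singular value), and absorb the discretization error into $\delta$. The union bound costs $N \log(3/\delta')$ in the exponent, which is dominated by $c\,\delta^2\, r_0(\Sigma)$ precisely under the dimension hypothesis \eqref{eq_dimension_dvoretzky_milman}, $N \leq c_1 \delta^2 r_0(\Sigma)/\log(1/\delta)$; this is exactly where the $\log(1/\delta)$ in the denominator is spent. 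Bookkeeping the constants then yields the failure probability $2\exp(-c_2 r_0(\Sigma)\delta^4/\log(1/\delta))$ — the $\delta^4$ arising because one needs the net parameter $\delta'$ polynomial in $\delta$ (so that the net error is a genuine fraction of the already-small $\delta\sqrt{\mathrm{tr}(\Sigma)}$ slack), which degrades $\delta^2$ to $\delta^4$ in the final exponent.

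The main obstacle I expect is the uniformity over $\theta \in S^{N-1}$: the one-point bound is routine, but the net argument requires simultaneously (i) a good deviation inequality with the correct $r_0(\Sigma)$ rate, (ii) a Lipschitz/operator-norm control on $\Gamma$ to pass from the net to all of $S^{N-1}$, and (iii) a careful choice of net fineness so that the union bound exponent $N\log(1/\delta')$ stays below $c\,r_0(\Sigma)\delta^4$, which is only possible because \eqref{eq_dimension_dvoretzky_milman} makes $N$ small relative to $r_0(\Sigma)$. An alternative, cleaner route that avoids explicit nets is to invoke Gaussian concentration of measure directly: the map $G = (g_1,\dots,g_N) \mapsto \inf_{\theta \in S^{N-1}} \|\Sigma^{1/2}(\sum_i \theta_i g_i)\|_{\ell_2}$ is Lipschitz in $G$ with constant $\|\Sigma^{1/2}\| = \sqrt{\lambda_1(\Sigma)}$, so it concentrates around its expectation with rate $\exp(-c\, s^2/\lambda_1(\Sigma))$, and one is left only with the soft lower bound $\mathbb{E}\inf_\theta \|\Sigma^{1/2}g_\theta\|_{\ell_2} \geq (1-\delta)\sqrt{\mathrm{tr}(\Sigma)}$ under \eqref{eq_dimension_dvoretzky_milman}, which is the content of the Gaussian Dvoretzky--Milman theorem and may simply be cited from \cite{artstein-avidan2015asymptotic} or \cite{mendelson2016dvoretzky}. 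I would present the concentration-of-measure version as the primary argument and mention the net computation as the source of the explicit $\delta^4/\log(1/\delta)$ dependence.
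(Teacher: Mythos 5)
The paper does not prove this lemma; it is invoked as a standard fact from Asymptotic Geometric Analysis (cf.\ the references to \cite{artstein-avidan2015asymptotic} and \cite{mendelson2016dvoretzky} in Section~\ref{subsection_without_effective_rank}), so there is no ``paper's own proof'' to compare against. Your reconstruction is essentially the standard argument: pass to the support-function/polar formulation $h_{\Gamma K}(\theta)=\|\Sigma^{1/2}g_\theta\|_{\ell_2}$, prove a one-point lower deviation bound, and close with a net over $S^{N-1}$ (or, alternatively, via Gaussian isoperimetry applied to $G\mapsto\inf_\theta\|\Sigma^{1/2}g_\theta\|_{\ell_2}$, which is $\sqrt{\lambda_1(\Sigma)}$-Lipschitz in Frobenius norm, plus a lower bound on the expectation from Gordon's comparison). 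This matches what the cited theorem actually delivers, and your remark on where the hypothesis $N\leqslant c_1\delta^2 r_0(\Sigma)/\log(1/\delta)$ is spent is accurate.

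One slip worth fixing: you identify the Hanson--Wright exponent with $r_0(\Sigma)$ via the equality ``$\mathrm{tr}(\Sigma)^2/\|\Sigma\|_{S_2}^2 = r_0(\Sigma)$'', but in the notation of Equation~\ref{eq_effective_rank} that quantity is $R_0(\Sigma)=\bigl(\sum_i\lambda_i\bigr)^2/\sum_i\lambda_i^2$, whereas $r_0(\Sigma)=\mathrm{tr}(\Sigma)/\lambda_1(\Sigma)$. Since $r_0(\Sigma)\leqslant R_0(\Sigma)$ always, Hanson--Wright in fact gives a \emph{stronger} pointwise deviation bound than the lemma needs, so this is not a gap in the argument --- but it is the wrong rank label, and if you want the exponent to read exactly $r_0(\Sigma)$ as in the statement, the clean way is the Lipschitz/Gaussian-isoperimetry route you already propose as the alternative: there the Lipschitz constant is $\sqrt{\lambda_1(\Sigma)}$ and the scale is $\sqrt{\mathrm{tr}(\Sigma)}$, so the concentration rate is $\exp(-c\delta^2\,\mathrm{tr}(\Sigma)/\lambda_1(\Sigma))=\exp(-c\delta^2 r_0(\Sigma))$ directly. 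Also note that Hanson--Wright really gives $\exp\bigl(-c\min\{t^2 R_0(\Sigma),\,t\,r_0(\Sigma)\}\bigr)$; you are quoting only one branch of the minimum, which happens to be the smaller one in the small-$t$ regime, so the statement should be qualified accordingly.

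Finally, I would not call the expectation lower bound $\mathbb{E}\inf_\theta\|\Sigma^{1/2}g_\theta\|_{\ell_2}\geqslant(1-\delta)\sqrt{\mathrm{tr}(\Sigma)}$ ``soft'' --- that bound (Gordon's escape through a mesh / minimax comparison) is precisely the substance of the one-sided Dvoretzky--Milman theorem, and it is the single step that cannot be replaced by a routine union bound without re-deriving what you are citing. So either cite it outright (as the paper implicitly does) or carry out the net computation in full; presenting the concentration-of-measure reduction while calling the remaining expectation bound soft understates where the real content lies.
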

Take $\delta=1/4$ for example, we have $4\mathrm{tr}(\Sigma)B_{2}^{N}\subset\Gamma(\Sigma^{1/2}B_{2}^p)$, so
\[
s_{\mathrm{min}}(\Gamma)=\sqrt{\underset{t\in S^{p-1}}{\mathrm{min}}\left<g_{i},t \right>^2}\geqslant 4\sqrt{\mathrm{tr}(\Sigma)}
\]
holds with probability at least $1-2\mathrm{exp}(-c r_{0}(\Sigma))$. Therefore, with probability at least $1-2\mathrm{exp}(-c_{1} r_{0}(\Sigma))-2\mathrm{exp}(-c_{2}N)$,
\[
\left\|\hat{\alpha}-\alpha^{\ast}\right\|_{\ell_{2}}\leqslant \left\|\alpha^{\ast}\right\|_{\ell_{2}}+\left\|\xi\right\|_{\psi_{2}}\sqrt{\frac{N}{\mathrm{tr}(\Sigma)}}.
\]
As for the prediction risk, we have: when $r>r_{1}^{\ast}$,
\[
\underset{\alpha\in\alpha^{\ast}+H_{r,\rho}}{\mathrm{inf}}\mathrm{P}_{N}\mathcal{L}_{\alpha}\geqslant r^2\left(\frac{16\mathrm{tr}(\Sigma)}{N}-\frac{1}{2}\zeta_{1} \right )-\frac{1}{2}\left\|\xi\right\|_{\psi_{2}}^2>-\frac{1}{2}\left\|\xi\right\|_{\psi_{2}}^2.
\]
From here on, the proof is as the same as that of Theorem \ref{Lemma_prediction_error}, the details are omitted.\par
Note that $r_{0}(\Sigma)\leqslant p$, and $p=cN\log{(1/\varepsilon)}$, so we can set $c_{1}, \delta$ wisely to adapt to the example discussed in subsection \ref{subsection_example}.\par

In summary, although interpolation learning suffers from estimating both noise $\xi$ and sign $\alpha^{\ast}$, it still generalize well if the smallest singular value of $\mathbf{X}$ is large enough such that it can absorb the level of noise, $\sqrt{N}\left\|\xi\right\|_{\psi_{2}}$, see Equation \ref{eq_localized_estimation_error}. The smallest singular value is used to weaken influence of noise. To make the smallest singular value large enough, the number of samples should satisfy an upper bound that depends on covariance of the input vector. This threshold is used to balance the rate of exponential decay(acquired by concentration or small-ball argument) and metric entropy(given by net argument). Therefore, this threshold depends on dimension $p$, sample size $N$ and covariance $\Sigma$. If we fix relationship between $p$ and $N$(like the example in subsection \ref{subsection_example}), we need $\Sigma$ has a large trace(or at least heavy tail of eigenvalues), which is the key to benign overfitting. Note that in this interpretation, there is no restrictions on concentration properties of input vector $X$, but its small-ball property, that is to say, $X$ should fully spread on its margin. It is its spreading that can absorb noise $\xi$. It is this that make minimum $\ell_{2}$ linear interpolant fit into heavy-tailed case. Finally, we believe that our result could be easily modified to "Informative-Outlier" framework, cf. \cite{chinot2020statistical}, to obtain a result in a "robust flavor" both for CS community and statistics community.

\bibliographystyle{plain}
\end{document}